\title{Classifying spaces for $1$-truncated compact Lie groups}
\author{Charles Rezk}
\date{ \today}
\address{Department of Mathematics \\
University of Illinois \\ 
Urbana, IL}
\email{rezk@illinois.edu}
\thanks{The author was supported under NSF grant DMS-1406121.}
\numberwithin{equation}{section}
  \let\c@subsection\c@equation
\theoremstyle{plain}   
\newtheorem{thm}[subsection]{Theorem}
\newtheorem{prop}[subsection]{Proposition}
\newtheorem{cor}[subsection]{Corollary}
\newtheorem{lemma}[subsection]{Lemma}
\theoremstyle{remark}
\newtheorem{rem}[subsection]{Remark}    
\newtheorem{exam}[subsection]{Example}
\theoremstyle{plain}
\begin{document}


\newcommand{\margnote}[1]{\mbox{}\marginpar{\tiny\hspace{0pt}#1}}

\def\lambada{\lambda}


\newcommand{\id}{\operatorname{id}}
\newcommand{\colim}{\operatorname{colim}}
\newcommand{\llim}{\operatorname{lim}}
\newcommand{\Cok}{\operatorname{Cok}}
\newcommand{\Ker}{\operatorname{Ker}}
\newcommand{\Image}{\operatorname{Im}}
\newcommand{\op}{{\operatorname{op}}}
\newcommand{\Aut}{{\operatorname{Aut}}}
\newcommand{\End}{{\operatorname{End}}}
\newcommand{\Hom}{{\operatorname{Hom}}}

\newcommand*{\ra}{\rightarrow}
\newcommand*{\lra}{\longrightarrow}
\newcommand*{\xra}{\xrightarrow}
\newcommand*{\la}{\leftarrow}
\newcommand*{\lla}{\longleftarrow}
\newcommand*{\xla}{\xleftarrow}

\newcommand{\ho}{\operatorname{ho}}
\newcommand{\hocolim}{\operatorname{hocolim}}
\newcommand{\holim}{\operatorname{holim}}

\newcommand*{\realiz}[1]{\left\lvert#1\right\rvert}
\newcommand*{\len}[1]{\left\lvert#1\right\rvert}
\newcommand{\set}[2]{{\{\,#1\mid#2\,\}}}
\newcommand*{\tensor}[1]{\underset{#1}{\otimes}}
\newcommand*{\pullback}[1]{\underset{#1}{\times}}
\newcommand*{\powser}[1]{[\![#1]\!]}
\newcommand*{\laurser}[1]{(\!(#1)\!)}
\newcommand{\ndiv}{\not|}
\newcommand{\pairing}[2]{\langle#1,#2\rangle}

\newcommand{\F}{\mathbb{F}}
\newcommand{\Z}{\mathbb{Z}}
\newcommand{\N}{\mathbb{N}}
\newcommand{\R}{\mathbb{R}}
\newcommand{\Q}{\mathbb{Q}}
\newcommand{\C}{\mathbb{C}}

\newcommand{\point}{{\operatorname{pt}}}
\newcommand{\Map}{\operatorname{Map}}
\newcommand{\eev}{\wedge}
\newcommand{\sm}{\wedge} 

\newcommand*{\mc}{\mathcal}
\newcommand*{\mf}{\mathfrak}
\newcommand*{\mr}{\mathrm}
\newcommand*{\mb}{\mathbb}
\newcommand*{\ul}{\underline}
\newcommand*{\ol}{\overline}
\newcommand*{\wt}{\widetilde}
\newcommand*{\wh}{\widehat}

\newcommand{\dfn}{\textbf}

\def\noloc{\;{:}\,}

\def\defeq{\overset{\mathrm{def}}=}

\newcommand{\forcepar}{\mbox{}\par}

\newcommand{\ad}{\operatorname{ad}}

\newcommand{\Top}{\mathrm{Top}}
\newcommand{\red}{\mathrm{red}}

\newcommand*{\Len}[1]{\left\lVert#1\right\rVert}
\newcommand{\Sk}{\operatorname{Sk}}

\newcommand{\eMap}{\ul{\mathrm{Map}}}
\newcommand{\Fun}{\mathrm{Fun}}

\begin{abstract}
A 1-truncated compact Lie group is any extension of a finite group by
a torus.  In this note we compute the homotopy types of
$\Map_*(BG,BH)$,  $\Map(BG,BH)$, 
and $\Map(EG, B_GH)$ for compact Lie groups $G$ and $H$ with $H$
1-truncated, showing 
that they are computed entirely in terms of 
spaces of homomorphisms from $G$ to $H$.  These results generalize
the
well-known case when $H$ is finite, and the case of $H$ compact
abelian due to Lashof, May, and Segal.
\end{abstract}

\maketitle


\section{Introduction}

By a \dfn{1-truncated} compact Lie group $H$, we mean one whose
homotopy groups vanish in dimensions 2 and greater.  Equivalently, $H$
is a compact Lie group with identity component $H_0$ a torus
(isomorphic to some $U(1)^d$); i.e., an extension of a finite group by
a torus.

The class of 1-truncated compact Lie groups includes (i) all finite groups,
and (ii) all compact abelian Lie groups, both of which are included in the
class (iii) all groups which
are isomorphic to a product of a compact abelian Lie group with a finite
group, or equivalently a product of a torus with  a finite group.

The goal of this paper is to extend certain results, which were
already known for finite groups, compact abelian Lie groups, or
products thereof, to all 1-truncated compact Lie groups.

We write $\Hom(G,H)$ for the space of continuous homomorphisms,
equipped with the compact-open topology.
Our first theorem relates this to the space of based maps between
classifying spaces.
\begin{thm}\label{thm:main}
For $G,H$  compact Lie groups with  $H$ 1-truncated,
the evident map 
\[
B\colon \Hom(G,H) \ra \Map_*(BG, BH)
\]
is a weak equivalence.   
\end{thm}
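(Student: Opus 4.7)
The strategy is to induct via the extension $1 \to T \to H \to \pi \to 1$, where $T = H_0$ is the identity component torus and $\pi = \pi_0(H)$ is finite. This induces a fibration $BT \to BH \to B\pi$; applying $\Map_*(BG, -)$ yields a fiber sequence of mapping spaces. On the homomorphism side, there is an analogous map $q_* \colon \Hom(G,H) \to \Hom(G,\pi)$, and the comparison maps $B$ assemble into a commutative square between the two. The plan is to reduce the theorem to two base cases: (i) $B\colon \Hom(G,\pi) \to \Map_*(BG,B\pi)$ is a weak equivalence (the finite case), and (ii) for each $\bar\rho \in \Hom(G,\pi)$, $B$ induces a weak equivalence on fibers over $\bar\rho$ (a twisted torus case).

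For (i), the space $\Hom(G,\pi)$ is discrete because any continuous homomorphism $G \to \pi$ factors through $\pi_0(G)$, and $\Map_*(BG,B\pi)$ has components indexed by the same set $\Hom(\pi_0(G),\pi)$ with each component contractible (since $\Omega B\pi = \pi$ is discrete, so $\Omega\Map_*(BG,B\pi) = \Map_*(BG,\pi) = *$). Thus $B$ is a weak equivalence in the finite case.

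For (ii), fix $\bar\rho\colon G \to \pi$. The fiber of $q_*$ at $\bar\rho$ is the space of homomorphic lifts of $\bar\rho$; if nonempty, it is a torsor under the topological abelian group $Z^1_{\mathrm{cont}}(G; T_{\bar\rho})$ of continuous $1$-cocycles with values in the twisted $G$-module $T_{\bar\rho}$. The corresponding fiber on the mapping-space side is the space of sections of the pullback $BT$-bundle $BG \times_{B\pi} BH \to BG$. I would identify these fibers via a twisted version of the Lashof--May--Segal theorem (the untwisted torus case) applied with $G$ acting on $T$ via $\bar\rho$.

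The main obstacle is this twisted fiber comparison, and in particular establishing that both fibers are empty or nonempty under the same cohomological obstruction: the algebraic obstruction to a lift lies in $H^2_{\mathrm{cont}}(G; T_{\bar\rho})$, while the topological lifting obstruction (the first $k$-invariant of $BH$ pulled back along $B\bar\rho$) lies in $H^3(BG; \pi_1(T)_{\bar\rho})$. These should agree via the coefficient sequence $0 \to \Z^d \to \R^d \to T \to 0$, using that continuous cohomology of a compact Lie group with real coefficients vanishes in positive degrees. Once the obstructions are matched and the twisted torus case is verified, the theorem follows by comparing fibrations.
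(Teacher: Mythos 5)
Your reduction skeleton (compare the fibration $\Map_*(BG,BT)\to\Map_*(BG,BH)\to\Map_*(BG,B\pi)$ with $\Hom(G,H)\to\Hom(G,\pi)$, settle the finite base case, then compare fibers) is reasonable, and your base case (i) is fine. But the proposal has a genuine gap exactly at step (ii), which you yourself flag as ``the main obstacle'': there is no ``twisted version of the Lashof--May--Segal theorem'' in the literature to invoke --- LMS treats only abelian $H$, i.e.\ the untwisted case --- and proving the twisted fiber comparison is essentially the entire content of the theorem, not a citation. Concretely, two things are missing. First, your obstruction-matching argument does not work as stated: the sequence $0\to\Z^d\to\R^d\to T\to 0$ does not induce a short exact sequence of \emph{continuous} cochain complexes (a continuous map $G^n\to T$ need not lift globally to $\R^d$, since $\R^d\to T$ is only a covering), and continuous cochain cohomology with discrete coefficients $\Z^d$ computes the cohomology of $B\pi_0(G)$, not of $BG$ (e.g.\ it vanishes in positive degrees for connected $G$, while $H^*(BG;\Z^d)$ does not). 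Matching the algebraic lifting obstruction with the $k$-invariant obstruction in $H^3(BG;\pi_1(T)_{\bar\rho})$ requires a finer cohomology theory (Segal--Mitchison/Moore-type cochains) or a direct geometric argument, and that comparison is itself a theorem of the same weight as the statement you are proving. Second, even granting that the fibers are simultaneously empty or nonempty, you must show the map on fibers is a weak equivalence: you need to compute the homotopy type of the space of homomorphic lifts (a torsor under the topological group $Z^1_{\mathrm{cont}}(G;T_{\bar\rho})$, whose $\pi_0$ and $\pi_1$ must be identified with the appropriate twisted cohomology of $BG$) and match it, via the map $B$, with all homotopy groups of the section space of the $BT$-bundle over $BG$; none of this is supplied.

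For comparison, the paper avoids the extension-by-$\pi$ decomposition entirely and works uniformly: it replaces $NH$ by the Reedy fibrant nerve $N(H,V)$ of the exponential crossed module, where $V=T_eH$ is the Lie algebra, so that $\Map_*(BG,BH)$ is computed by pairs $(\zeta,\nu)$ consisting of a continuous map $\zeta\colon G\to H$ and a twisted continuous $2$-cocycle $\nu$ with values in the \emph{contractible vector space} $V$ (with $G$ acting through $\ad\circ\ol\zeta$). Haar integration then gives an explicit contracting homotopy $dH=\mathrm{id}$ on these $V$-valued cocycles, yielding a deformation retraction onto the honest homomorphisms. Because all cocycle data lives in $V$ rather than in $T$ or $\Z^d$, the lifting and coefficient-sequence issues that obstruct your approach never arise. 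If you want to salvage your route, the twisted fiber comparison would have to be proved from scratch, and the most natural way to do so is essentially the paper's cocycle argument.
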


Using this, we will derive an unbased variant.
\begin{thm}\label{thm:unbased}
For $G,H$  compact Lie groups with $H$   1-truncated, there is a weak  
equivalence
\[
\Hom(G,H)\times_H EH \ra \Map(BG,BH).
\]
Here $H$ acts on $\Hom(G,H)$ by conjugation: $h\cdot \phi= h\phi
h^{-1}$.  
\end{thm}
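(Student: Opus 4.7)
The plan is to deduce Theorem~\ref{thm:unbased} from Theorem~\ref{thm:main} by comparing two fibrations over $BH$. On one side, evaluation at the basepoint is a Hurewicz fibration $\mathrm{ev}_*\colon \Map(BG,BH) \to BH$ with fiber $\Map_*(BG,BH)$. On the other, the projection from the Borel construction $\Hom(G,H) \times_H EH \to BH$ is a fiber bundle with fiber $\Hom(G,H)$. My goal is to produce a continuous comparison map $\Phi\colon \Hom(G,H)\times_H EH \to \Map(BG,BH)$ over $BH$ whose restriction to fibers is the weak equivalence $B$ of Theorem~\ref{thm:main}, and then to compare long exact sequences of homotopy groups.

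The cleanest route to $\Phi$ is via the classifying space of a functor groupoid. View $BG$ and $BH$ as classifying spaces of the one-object topological groupoids $\mathbf{B}G$ and $\mathbf{B}H$. The topologically enriched functor groupoid $\Fun(\mathbf{B}G,\mathbf{B}H)$ has object space $\Hom(G,H)$, and a morphism $\phi \to \phi'$ for each $h \in H$ with $\phi' = h\phi h^{-1}$; it is the action groupoid of $H$ acting by conjugation on $\Hom(G,H)$, and its classifying space is precisely the Borel construction $\Hom(G,H) \times_H EH$. The standard evaluation pairing—which sends an $n$-simplex of composable natural transformations to an $n$-parameter family of maps $BG \to BH$—then yields a continuous map $B\Fun(\mathbf{B}G,\mathbf{B}H) \to \Map(BG,BH)$, which is $\Phi$. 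By construction $\Phi$ lies over $BH$ (via the evident projection to $B\mathbf{B}H = BH$), and its restriction to the fiber over the basepoint is exactly the map $B$ of Theorem~\ref{thm:main}.

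With $\Phi$ in hand, I would compare the long exact sequences of homotopy groups of the two fibrations. The map on the common base $BH$ is the identity; the map on fibers is a weak equivalence by Theorem~\ref{thm:main}; and the monodromy actions of $\pi_1(BH) = \pi_0(H)$ on the fibers agree, both being conjugation of homomorphisms, so Theorem~\ref{thm:main} is equivariant for these actions. A routine five-lemma argument then forces $\Phi$ to induce isomorphisms on $\pi_n$ for all $n \geq 1$ and all basepoints, while the same comparison at the level of $\pi_0$ gives a bijection on components. I expect the main technical obstacle to be the construction of $\Phi$: producing a genuinely continuous map, not merely a map up to homotopy, which is visibly a map over $BH$ and which visibly restricts on fibers to the map $B$. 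Once a convenient model (for instance, the nerve of $\Fun(\mathbf{B}G,\mathbf{B}H)$ realized via bar constructions) is fixed, this becomes a matter of careful bookkeeping, and the rest of the argument is standard.
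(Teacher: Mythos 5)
Your proposal is correct and is essentially the paper's own argument: the paper constructs your map $\Phi$ as the natural map $\rho\colon \Len{\eMap(NG,NH)}\ra \Map(BG,BH)$ (using $\eMap(NG,NH)\approx N\Fun(G,H)$ and the identification $\Len{N\Fun(G,H)}\approx (\Hom(G,H)\times EH)/H$, which makes your ``evaluation pairing'' precise via the adjunction and the fact that realization preserves finite products), and then compares the two fibrations over the path-connected base $BH$, reducing to the fiberwise weak equivalence of \eqref{thm:main}. Your extra check of monodromy is harmless but not needed beyond the standard fiberwise criterion the paper invokes.
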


When $H$ is discrete, these are well-known and classical results.  
The case of $H$  an abelian compact Lie group is proved in
\cite{lashof-may-segal-equivariant-abelian}; both the finite and
compact abelian Lie cases are discussed in
\cite{may-remarks-equivariant-bundles}*{Thm.\ 5 and Thm.\ 10}.  

\begin{rem}\label{rem:unbased-in-terms-of-cent}
For $G$ and $H$ compact, there is a homeomorphism
\eqref{prop:hom-space-topology} 
\[
\Hom(G,H) \approx \coprod_{[\phi\colon G\ra H]} H/C_H(\phi)
\]
where the coproduct is over conjugacy classes of homomorphisms, and
$C_H(\phi)$ is the centralizer of $\phi(G)$ in $H$.  When $H$ is a
1-truncated compact Lie group, we see from \eqref{thm:main} that
$\Map_*(BG,BH)$ is therefore
weakly equivalent to this coproduct, and from \eqref{thm:unbased} that
there is a weak equivalence
\[
\Map(BG,BH) \approx \coprod_{[\phi\colon G\ra H]} BC_H(\phi). 
\]
\end{rem}

Finally, we will give a description of the fixed points of the
equivariant classifying space $B_GH$, which represents $G$-equivariant
$H$-principal bundles, in the case that $G$ and $H$ are compact Lie
and $H$ is 1-truncated.

\begin{thm}\label{thm:equivariant}
For $G, H$ compact Lie groups with $H$ 1-truncated, the map
\[
\pi^*\colon B_G H \ra \Map(EG, B_GH)
\]
induced by restriction along $\pi\colon EG\ra *$ is a $G$-equivariant weak
equivalence.  
\end{thm}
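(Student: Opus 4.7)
The plan is to verify the $G$-equivariant weak equivalence by checking on fixed-point sets. For each closed subgroup $K \le G$, I need to show that
$$(B_GH)^K \xra{\pi^*} \Map(EG, B_GH)^K = \Map^K(EG, B_GH)$$
is a weak equivalence of spaces. The strategy is to identify both sides with $\Map(BK, BH)$ and then invoke Theorem \ref{thm:unbased} applied to the pair $(K, H)$.

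For the source, the universal property of $B_GH$ identifies $(B_GH)^K$, up to weak equivalence, with the classifying space for $K$-equivariant principal $H$-bundles on a point, namely $\Hom(K, H) \times_H EH$; by Theorem \ref{thm:unbased} and Remark \ref{rem:unbased-in-terms-of-cent}, this is in turn weakly equivalent to $\Map(BK, BH)$. For the target, observe that $EG$ is a free contractible $K$-CW complex (hence a model of $EK$). A $K$-equivariant map $EG \to B_GH$ classifies a $K$-equivariant principal $H$-bundle on $EG$; by freeness of the $K$-action one passes to the quotient $EG/K \simeq BK$, obtaining an ordinary principal $H$-bundle on $BK$ classified by a map $BK \to BH$, and this construction upgrades to a natural weak equivalence $\Map^K(EG, B_GH) \simeq \Map(BK, BH)$. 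Tracing the definitions, $\pi^*$ on $K$-fixed points sends a conjugacy class $[\phi : K \to H]$ to the constant map $c_\phi: EG \to B_GH$; the classified $K$-equivariant bundle on $EG$ is the trivial $H$-bundle with twisted $K$-action, which descends to the principal $H$-bundle on $BK$ classified by $B\phi$. Hence under the two identifications with $\Map(BK, BH)$, the map $\pi^*$ corresponds to the weak equivalence $\phi \mapsto B\phi$ of Theorem \ref{thm:unbased}, and is therefore itself a weak equivalence on $K$-fixed points.

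The main obstacle is the rigorous justification of the two identifications at the level of spaces, not merely on $\pi_0$. An honest zigzag of natural maps is model-dependent: one would either select a concrete model of $B_GH$ (such as a bar construction in which the ``pass to quotient'' functor on bundles yields a genuine map of mapping spaces) or, equivalently, establish the model $B_GH \simeq \Map(EG, BH)$ with conjugation $G$-action --- whose validity for 1-truncated $H$ is itself a consequence of Theorem \ref{thm:unbased} --- and then deduce the theorem from the fact that the projection $EG \times EG \to EG$ (with $G$ acting diagonally on the source) is a $G$-equivariant weak equivalence of free contractible $G$-spaces, so induces a $G$-equivariant weak equivalence on mapping spaces into any target.
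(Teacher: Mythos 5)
Your high-level skeleton (reduce to $K$-fixed points, identify both sides with $\Map(BK,BH)$, feed in \eqref{thm:unbased}) is also the paper's skeleton, but the step you yourself flag as ``the main obstacle'' is not a technicality to be deferred --- it is the entire content of the theorem, and neither of your proposed remedies closes it. As the remark following the theorem in the paper emphasizes, the abstract equivalences $(B_GH)^K \simeq \coprod_{[\phi]} BC_H(\phi) \simeq \Map(BK,BH)$ are already known (Lashof--May together with \eqref{thm:unbased}); what has to be proved is that the specific map $\pi^*$ induces them. Your ``tracing the definitions'' paragraph only pins down what $\pi^*$ does at the level of components: a constant map at a $K$-fixed point of isotropy type $\phi$ descends to the bundle on $BK$ classified by $B\phi$. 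That gives a bijection of $\pi_0$'s of the fixed-point spaces, but says nothing about whether the induced map $BC_H(\phi)\to BC_H(\phi)$ on each component is a weak equivalence, so nothing beyond $\pi_0$ has actually been verified.

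Of your two suggested fixes, route (b) is circular: a $G$-equivariant identification $B_GH \simeq \Map(EG,BH)$ is \emph{not} a consequence of \eqref{thm:unbased}. That theorem gives weak equivalences of fixed-point spaces one subgroup at a time, with no $G$-map realizing them, and producing a $G$-map (or zigzag) $B_GH \to \Map(EG,BH)$ which is an equivalence on all fixed points is a statement of exactly the same nature and difficulty as \eqref{thm:equivariant} itself; granted such a map, your naturality argument with $EG\times EG\to EG$ does finish the proof, but it merely relocates the problem. Route (a) --- choose a concrete model in which the Borel-quotient construction is realized by honest maps of spaces --- is what the paper actually does, and it is where all the work lies: one constructs the explicit $(G,H)$-bundle $P=\Len{NC}\to X=\Len{N\Fun(G,H)}$ with $C=\Fun(G,H)\times_H(H\curvearrowright H)$, shows via the fibration $\tau\colon P\to \Hom(G,H)$ of \eqref{lemma:tau-is-fibration} (whose fibers $EH$ and $(E_GH)^{\Lambda_\phi}$ are contractible) that its classifying map $\alpha\colon X\to (B_GH)^G$ is a weak equivalence, constructs an explicit bundle map $(P\times EG)/G\to EH$ exhibiting the Borel quotient of $P$ as classified by the map $\rho$ of \eqref{thm:unbased}, and then uses the universal property of $B_GH$ to obtain the homotopy-commutative square of \eqref{lemma:htpy-commutative-classifying}, which transfers the equivalence from $\rho$ to $\pi^*$ on fixed points. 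Without a construction of this kind, your argument does not go through.
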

The case of $H$ finite or compact abelian is proved in
\cite{may-remarks-equivariant-bundles}. 

\begin{rem}
For any closed
subgroup $G'\leq G$, taking $G'$ fixed points gives rise to  maps
\[
(B_GH)^{G'} \ra \Map(EG,B_GH)^{G'} \xla{\sim} \Map(EG, BH)^{G'} \approx
\Map(BG',BH),
\]
and \eqref{thm:equivariant} amounts to saying that for any $G'$ the
first map in this sequence is a weak 
equivalence.  (The middle map arises from a $G$-equivariant weak
equivalence $\Map(EG,BH)\ra \Map(EG,B_GH)$; see the proof of
\eqref{lemma:htpy-commutative-classifying}.) 
It is standard \cite{lashof-may-gen-equivariant-bundles}*{Theorem 10}
that, for arbitrary compact $G$ and $H$,  $(B_GH)^{G'}$ 
is weakly equivalent to $\coprod_{[\phi\colon G'\ra H]} BC_H(\phi)$,
while if $H$  is also 1-truncated, \eqref{thm:unbased} and
\eqref{rem:unbased-in-terms-of-cent} 
imply that $\Map(BG',BH)$ is also weakly equivalent to the same
coproduct, thus giving an abstract weak equivalence
$(B_GH)^{G'}\approx \Map(BG',BH)$.  The point of
\eqref{thm:equivariant} is to show that the map $\pi^*$ exhibits this
equivalence. 
\end{rem} 

The map of \eqref{thm:equivariant} in a certain sense classifies the
formation of the $G$-Borel quotient.  That is, given a $G$-equivariant
map $f\colon X\ra B_GH$ classifying a $G$-equivariant principal
$H$-bundle $P\ra X$, the $G$-equivariant map $\pi^*f\colon X\ra
\Map(EG,B_GH)$ is  adjoint to a non-equivariant map $X\times_G EG\ra
B_GH$ which classifies the bundle $P\times_G EG\ra X\times_G EG$; see
\cite{may-remarks-equivariant-bundles}.   As a consequence of the
theories of classifying spaces, we obtain
the following.
\begin{cor}
Let $G$ and $H$ be compact Lie groups with $H$ 1-truncated.  Then for
a paracompact 
$G$-space $X$,
formation of $G$-Borel quotient gives 
rise to a bijection between (i) equivalence classes of $G$-equivariant
principal $H$-bundles over $X$, and (ii) equivalence classes of
principal $H$-bundles over $X\times_G EG$.  
\end{cor}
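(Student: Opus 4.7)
The plan is to combine the standard classification theorems for principal bundles with the equivariant weak equivalence of \eqref{thm:equivariant}. By the Lashof--May classification \cite{lashof-may-gen-equivariant-bundles}, equivalence classes of $G$-equivariant principal $H$-bundles over the paracompact $G$-space $X$ are in natural bijection with the set $[X, B_GH]_G$ of $G$-homotopy classes of $G$-equivariant maps. Similarly, equivalence classes of ordinary principal $H$-bundles over the (paracompact) Borel space $X\times_G EG$ are classified by $[X\times_G EG, BH]$.

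Next, I would rewrite the second set by adjunction. Giving $BH$ the trivial $G$-action, maps $X\times_G EG\ra BH$ are the same as $G$-equivariant maps $X\times EG\ra BH$, which by the product/hom adjunction for $G$-spaces correspond to $G$-equivariant maps $X\ra \Map(EG, BH)$ with $G$ acting on the target by conjugation. The same identification holds on homotopy classes, giving $[X\times_G EG, BH]\cong [X, \Map(EG, BH)]_G$. Now \eqref{thm:equivariant} provides a $G$-equivariant weak equivalence $\pi^*\colon B_GH\ra\Map(EG, B_GH)$, and the remark following it records a $G$-equivariant weak equivalence $\Map(EG, BH)\ra \Map(EG, B_GH)$; chaining these with the previous identification yields
\[
[X, B_GH]_G \cong [X, \Map(EG, B_GH)]_G \cong [X, \Map(EG, BH)]_G \cong [X\times_G EG, BH],
\]
a bijection between the two sets appearing in the statement.

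The remaining step is to verify that this composite bijection is the one induced by the Borel construction $P\mapsto P\times_G EG$. This follows from the explicit description given in the paragraph preceding the corollary: for a $G$-equivariant classifying map $f\colon X\ra B_GH$ of a bundle $P\ra X$, the map $\pi^* f$ corresponds under the product/hom adjunction to a map $X\times_G EG\ra B_GH$ classifying the Borel bundle $P\times_G EG$. The main obstacle is therefore not any deep argument, but rather the bookkeeping needed to check that the various classifying-space identifications are natural with respect to the bundle constructions; given \eqref{thm:equivariant} this is routine.
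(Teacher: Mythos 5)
Your proposal follows essentially the same route as the paper, which states this corollary without a separate argument and intends exactly the deduction you give: the Lashof--May classification of numerable $(G,H)$-bundles over paracompact $X$ by $[X,B_GH]_G$, the classification of $H$-bundles over the Borel construction by homotopy classes into $BH$, the adjunction $[X\times_G EG,BH]\cong [X,\Map(EG,BH)]_G$, the $G$-equivariant equivalences of \eqref{thm:equivariant} and of $\Map(EG,BH)\ra\Map(EG,B_GH)$, and the compatibility with $P\mapsto P\times_G EG$ recorded in the paragraph preceding the corollary.  The only points you (like the paper) leave implicit are point-set/representability caveats --- e.g.\ that $X\times_G EG$ supports the Dold-type classification (numerability) and that the $G$-weak equivalences indeed induce bijections on $[X,-]_G$ for the spaces at hand --- which is the same level of detail the paper delegates to ``the theories of classifying spaces.''
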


\begin{rem}
For comparison,  we note that there are well-known results (stemming
from work of Dwyer-Zabrodsky
\cite{dwyer-zabrodsky-maps-between-classifying} and Notbohm
\cite{notbohm-maps-between-classifying}) on spaces of maps \emph{from}
(rather than \emph{to}) the classifying space of a $p$-toral group
(a $p$-toral group is  an extension of a \emph{finite $p$-group} by a torus).
For instance, 
\cite{notbohm-maps-between-classifying}*{Thm.\ 1.3} may be interpreted
as saying that for an arbitrary compact Lie group $H$ and $p$-toral
$G$, the map $B_GH\ra \Map(EG,B_GH)$ (cf.\ \eqref{thm:equivariant})
induces an isomorphism in mod $p$ homology on fixed points for all
closed subgroups of $G$ (this interpretation is given as
\cite{may-remarks-equivariant-bundles}*{Thm. 9}.)
\end{rem}

\subsection{Organization of the paper}

The proof of \eqref{thm:main} is the probably the most interesting
part of  the paper.  It is carried out in
\S\S\ref{sec:nerve-crossed-module}--\ref{sec:proof-based}.  The key
ingredient is the use of the nerve $N(H,V)$ of the ``exponential
crossed module'' \eqref{exam:toral-crossed-module} of the 
1-truncated compact Lie group $H$.  We first show that the simplicial space
$N(H,V)$ is a Reedy 
fibrant model for the usual simplicial nerve $NH$ of $H$
\eqref{cor:reduction-to-nerve}, and 
so can be used to compute maps $BG$ to  $BH$, in
terms of maps of simplicial spaces from $NG$ to $N(H,V)$.  The proof
is completed (\S\ref{sec:proof-based}) by showing that, in a certain
sense, the difference 
between $\Hom(G,H)$ and the space $\Map_{s\Top}(NG,N(H,V))$ of maps
between simplicial spaces is measured precisely by the continuous
2-cocycles on $G$ with values in $V$, modulo boundaries of 1-cocycles.  Because
$G$ is compact, Haar measure gives a contracting homotopy
\eqref{prop:deformation-retraction} on the
complex of continuous chains on $G$.    A sketch by the author of this
proof originally appeared  
as an answer to a question on the site
MathOverflow\footnote{``Equivariant 
  classifying spaces from classifying spaces'',
  \url{http://mathoverflow.net/q/223546}.}. 

Our approach gives a uniform proof of \eqref{thm:main} for all
1-truncated compact Lie groups $H$; furthermore, even in the case of
abelian $H$, it is somewhat more direct than the one given in
\cite{lashof-may-segal-equivariant-abelian}.  

We derived the unbased theorem \eqref{thm:unbased} from the based
version \eqref{thm:main} in \S\ref{sec:proof-unbased}, by comparing
associated fibrations over $BH$.  

The result on equivariant
classifying spaces \eqref{thm:equivariant} is proved in
\S\ref{sec:proof-equivariant}.  The proof relies on an explicit model,
built as the nerve of a certain topological category, of the
restriction of the universal $(G,H)$-bundle to the fixed point
subspace $(B_GH)^G\subseteq B_GH$.  The explicit model we use appears
to be essentially of the type  described in
\cite{guillou-may-merling-cat-models-eq-class-space}.  

In \S\ref{sec:space-of-homs}, we give
for the convenience of the reader a proof of the identification of
$\Hom(G,H)$ as mentioned in
\eqref{rem:unbased-in-terms-of-cent}.

\subsection{Acknowledgments}

I thank Peter May for comments on a draft of this paper.

\subsection{Conventions}

In this paper, we write $\Top$ for the category of \emph{compactly generated
weak Hausdorff spaces (CGWH)}, the standard convenient category of
spaces. 
This category is cartesian closed, and we 
write $\Map(X,Y)$ for the internal function object, i.e., continuous
maps with the k-ification of the compact-open topology.  We make use of
the ``usual'' model structure on $\Top$, in which weak equivalences
are weak equivalences on homotopy groups, and fibrations are Serre
fibrations.

\section{Nerve of a topological crossed module}
\label{sec:nerve-crossed-module}

\subsection{Crossed modules}\label{subsec:crossed-module}

Recall that a \dfn{crossed module} consists of
\begin{itemize}
\item groups $H$ and $V$,
\item a homomorphism $\epsilon\colon V\ra H$,
\item a homomorphism $\alpha\colon H\ra \Aut(V)$,
\item such that (i) $\epsilon(\alpha(h)(v))=h \epsilon(v) h^{-1}$
  and (ii) $\alpha(\epsilon(v))(v')=vv'v^{-1}$ for $h\in H$, $v,v'\in V$.
\end{itemize}
A \dfn{topological crossed module} is one in which $V$ and $H$ are
topological groups, and $\epsilon$ and $\alpha$ are continuous.  I'll
typically write $(H, V)$ for the crossed module, leaving $\epsilon$
and $\alpha$ understood.  Note that we will often
consider crossed modules in which $V$ is an abelian group, in which
case we will switch to additive notation for $V$, though not for $H$.

\begin{exam}
Given any group $H$, there is a unique crossed module which we will denote
$(H,0)$, in which $0$ is the trivial group.   
\end{exam}

\begin{exam}[Exponential crossed module]\label{exam:toral-crossed-module}
The following example is the crucial one for this paper.  Suppose $H$
is a 1-truncated compact Lie group.  We set
\begin{itemize}
\item $V:= T_eH$, the Lie algebra of $H$, which is a group under
  addition of vectors;
\item $\epsilon:= \exp\colon V\ra H$, the exponential map; this is a
  homomorphism since $H_0$ is abelian;
\item $\alpha := \ad \colon H\ra GL(V)$, the adjoint action. 
\end{itemize}
We typically write the group law of $V$ additively, so the identities
for the crossed module structure become
\[
\exp(\ad(h)(v)) = h\exp(v)h^{-1},\qquad \ad(\exp(v))(v') = v+v'-v=v'.
\]
The following features of this case will be significant:
\begin{enumerate}
\item 
$\alpha=\ad\colon H\ra GL(V)$ factors through the quotient group $H/H_0$,
\item 
$\epsilon=\exp\colon V\ra H$ is a covering map,
\item 
the underlying space of $V$ is contractible.
\end{enumerate}

\end{exam}

\subsection{Nerve of a crossed module}

The \dfn{nerve} of a topological crossed module $N(H,V)$ is the
simplicial space defined as follows; except for the topology, this is
as in \cite{brown-groupoids-crossed-objects}*{\S3.1}.  The space
$N(H,V)_n$ in     
degree $n$ is the space of tuples
\[
\bigl((h_{ij})_{0\leq i\leq j\leq n}, (v_{ijk})_{0\leq i\leq j\leq
  k\leq n}\bigr), \qquad h_{ij}\in H, \qquad v_{ijk}\in V,
\]
satisfying the identities
\begin{enumerate}
\item $h_{ii}=e$ and $v_{iij}=v_{ijj}=e$ for all $i\leq j$,
\item $h_{ik}=\epsilon(v_{ijk})h_{ij}h_{jk}$ for all $i\leq
  j\leq k$,
\item $v_{ik\ell}v_{ijk} = v_{ij\ell}\,\alpha(h_{ij})(v_{jk\ell}) $
  for all $i\leq j\leq k\leq \ell$.
\end{enumerate}
The action of simplicial operators $\delta\colon [n]\ra [m]$ is the
evident one: $(\delta 
h)_{ij}=h_{\delta(i),\delta(j)}$ and $(\delta
v)_{ijk}=v_{\delta(i),\delta(j),\delta(k)}$.  A standard 
argument shows that as a space $N(H,V)_n \approx H^n\times
V^{\binom{n}{2}}$, e.g., via the projection to coordinates $h_{0i}$,
$1\leq i\leq n$ and $v_{0ij}$, $1\leq i<j\leq n$.

Note that $N(H,V)_0=*$, i.e., $N(H,V)$ is a \emph{reduced}
simplicial space.

\begin{exam}
The nerve of
$N(H,0)$ is precisely the usual nerve of the group $H$; we write $N(H):=
N(H,0)$.  
\end{exam}

\subsection{Simplicial spaces and the Reedy model structure}

We write $s\Top$ for the category of simplicial spaces, i.e., functors
$\Delta^\op \ra \Top$.  We are going to use the \dfn{Reedy model
  structure} on $s\Top$.  We will need to use the following features
of this model structure:
\begin{enumerate}
\item Weak equivalences $f\colon X\ra Y$ in $s\Top$ are precisely the
  levelwise weak equivalences, i.e., $f_n\colon X_n\ra Y_n$ is a weak
  equivalence for all $n\geq0$.
\item An object $X$ is cofibrant (\dfn{Reedy cofibrant}) if and only the latching space
  inclusions $\gamma_n\colon L_nX\ra X_n$ are cofibrations in $\Top$.
\item An object $Y$ is fibrant (\dfn{Reedy fibrant}) if and only if
  the matching space 
  projections $\delta_n\colon Y_n\ra M_nY$ are fibrations in $\Top$.
\item The model structure is topological.  In particular, if $X$ is a 
  cofibrant simplicial space and $Y\ra Y'$ is a weak equivalence
  between fibrant simplicial spaces, then $\Map_{s\Top}(X,Y)\ra
  \Map_{s\Top}(X,Y')$ is a weak equivalence of spaces.
\end{enumerate}
We will need to examine latching and matching spaces in a bit more detail.

\subsection{Latching and matching spaces}

We recall the notion of latching and matching spaces.  For 
simplicial spaces $X\colon \Delta^\op\ra \Top$ and  all
$n\geq0$,  we have natural maps of spaces
\[
L_nX\xra{\gamma_n} X_n\xra{\delta_n} M_nX
\]
where 
\[
L_nX = \colim_{(\Delta^\op_{/[n]})_{<n}} X,\qquad M_n =
\llim_{(\Delta^\op_{[n]/})_{<n}} X,
\]
called the \dfn{latching} and \dfn{matching} spaces of $X$.  

\subsection{Latching spaces for the nerve of a group}

\begin{prop}\label{prop:latching-nerve-of-group}
Let $G$ be a topological group, and $NG\in s\Top$ its nerve.  Then for
each $n\geq0$, the latching inclusion $\gamma_n\colon L_n(NG)\ra
(NG)_n$ is isomorphic to the inclusion
\[
\set{(g_1,\dots,g_n)}{\exists i,\; g_i=e} \ra G^n.
\]
In particular, $NG$ is Reedy cofibrant if $\{e\}\ra G$ is a
cofibration in $\Top$.
\end{prop}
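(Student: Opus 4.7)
The plan is to compute the colimit $L_n(NG)$ directly from its defining diagram of degeneracies, and then deduce the cofibration claim via iterated pushouts. The key observation is that the degeneracy $s_i\colon (NG)_{n-1}\to (NG)_n$ inserts an identity: $(g_1,\dots,g_{n-1})\mapsto (g_1,\dots,g_i,e,g_{i+1},\dots,g_{n-1})$, so its image is the closed subspace $G^i\times\{e\}\times G^{n-1-i}\subset G^n$ (closed because $G$ is weak Hausdorff).

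On underlying sets, the indexing subcategory $(\Delta^\op_{/[n]})_{<n}$ is generated by the non-identity codegeneracies $[n]\twoheadrightarrow [m]$, so by the Eilenberg--Zilber lemma the set underlying $L_n(NG)$ is the set of degenerate $n$-simplices, namely $D_n := \bigcup_i \Image(s_i) = \set{(g_1,\dots,g_n)\in G^n}{\exists i,\; g_i = e}$. To upgrade this to a homeomorphism I would realize $L_n(NG)$ as an iterated pushout, gluing in $\Image(s_0),\Image(s_1),\dots,\Image(s_{n-1})$ one at a time; the simplicial identity $s_j s_i = s_i s_{j-1}$ for $i<j$ shows that the intersection of each new piece with the previously constructed union is itself a union of closed images of degeneracies from $(NG)_{n-2}$, and is therefore correctly computed by the induction on $n$. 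Because each degeneracy is a closed embedding, the pushouts in $\Top$ of these closed inclusions coincide with set-theoretic unions carrying the subspace topology, yielding the desired homeomorphism $L_n(NG)\approx D_n$ as a subspace of $G^n$.

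For the cofibration statement, if $\{e\}\hookrightarrow G$ is a cofibration, then each inclusion $G^i\times\{e\}\times G^{n-1-i}\hookrightarrow G^n$ is a product of a cofibration with identity maps and so is itself a cofibration; the same iterated pushout then presents $D_n\hookrightarrow G^n$ as a finite composition of pushouts of cofibrations along cofibrations, yielding Reedy cofibrancy of $NG$. The main obstacle is the topological step above: certifying that the iterated pushout carries precisely the subspace topology inherited from $G^n$. This reduces, via the simplicial identities, to the standard compatibility of pushouts of closed inclusions in $\Top$ with set-theoretic union.
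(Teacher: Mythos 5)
Your identification of the latching map is correct, and it is exactly the ``standard'' computation the paper leaves unproved: the Eilenberg--Zilber lemma gives the underlying set of $L_n(NG)$ as the degenerate simplices, and since each $s_i$ is a closed embedding (it is split by a face map, and its image $G^i\times\{e\}\times G^{n-1-i}$ is closed because points are closed in weak Hausdorff spaces) whose pairwise intersections are images of twofold degeneracies, the colimit topology agrees with the subspace topology on $\set{(g_1,\dots,g_n)}{\exists i,\; g_i=e}$; your iterated-pushout bookkeeping is a perfectly good way to organize that. One small imprecision: the indexing category $(\Delta^\op_{/[n]})_{<n}$ as the paper defines it contains \emph{all} operators $[n]\ra[m]$ with $m<n$, not only the codegeneracies, so you should add the (standard) remark that the surjections form a final subcategory, via epi--mono factorization, so restricting to them does not change the colimit.

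The genuine gap is in the last step, the deduction of Reedy cofibrancy. Your iterated pushout presents the \emph{object} $D_n=\bigcup_i\Image(s_i)$ as glued from pieces; it does not factor the \emph{inclusion} $D_n\ra G^n$, and knowing that each piece $G^i\times\{e\}\times G^{n-1-i}\ra G^n$ is a cofibration does not imply that the inclusion of their union is one: in the model structure the paper uses (Serre fibrations, so cofibrations are retracts of relative cell complexes), cofibrations are not closed under unions of subspaces in this generality --- that kind of union theorem (Lillig) is a statement about Hurewicz cofibrations. Likewise ``a cofibration times an identity is a cofibration'' is not true for Quillen cofibrations unless the other factor is cofibrant; it holds here because $\{e\}\ra G$ being a cofibration makes $G$, hence the (k-ified) products $G^k$, cofibrant, but that must be said. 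The standard repair uses exactly the decomposition you already wrote down: the inclusion $D_n\ra G^n$ is the pushout-product of the previous latching inclusion $D_{n-1}\ra G^{n-1}$ with $\{e\}\ra G$ (your identification of the pushout $D_{n-1}\times G\cup_{D_{n-1}\times\{e\}}G^{n-1}\times\{e\}$ with the subspace $D_n$ is what legitimizes this), equivalently it is the $n$-fold pushout-product of $\{e\}\ra G$ with itself; induction together with the pushout-product axiom for the Quillen model structure on CGWH spaces then gives that every latching map is a cofibration. (If one instead read ``cofibration'' as closed Hurewicz cofibration, one would quote the NDR-pair product theorem and Lillig's union theorem at this point.) With that substitution your argument is complete.
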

\begin{proof}
Standard.
\end{proof}

\subsection{Matching spaces for the nerve of a crossed module}

We describe the matching projections for the nerve of a topological
crossed module. 
\begin{prop}\label{prop:matching-projections}
Consider $N:=N(H,V)$ the nerve of a topological crossed module.  We write
$M_n:=M_nN$ for its matching spaces.  
\begin{enumerate}
\item [(0)] $\delta_0\colon N_0\ra M_0$ is the isomorphism of 1-point
  spaces.
\item [(1)] $\delta_1\colon N_1\ra M_1$ is the projection $H\ra *$.
\item [(2)] $M_2\approx H^{\times 3}$, and there is a pullback square
\[\xymatrix@C=100pt{
{N_2} \ar[d]_{\delta_2} \ar[r]
& {V} \ar[d]^{\epsilon}
\\
{M_2} \ar[r]_{(h_{01},h_{02},h_{12})\mapsto
  h_{02}h_{12}^{-1}h_{02}^{-1}} 
& {H}
}\]
\item [(3)] There is a commutative diagram
\[\xymatrix{
{N_3} \ar[r] \ar[d]_{\delta_3}
& {\{e\}} \ar[d]
\\
{M_3} \ar[r] \ar[d]
& {\Ker \epsilon} \ar[r] \ar[d]
& {\{e\}} \ar[d]
\\
{H^{\times 3}\times V^{\times 3}} \ar[r]_-{\pi}
& {V} \ar[r]_{\epsilon}
& {H}
}\]
in which all squares are pullback squares, and $\pi$ is given by 
\[
(h_{01},h_{12},h_{23}, v_{012}, v_{013}, v_{023}, v_{123}) \mapsto
v_{023}v_{012} \alpha(h_{01})(v_{123})^{-1} v_{013}^{-1}.
\]
\item [($\geq 4$)] $\delta_n\colon N_n\ra M_n$ is an isomorphism for
  $n\geq 4$.
\end{enumerate}
\end{prop}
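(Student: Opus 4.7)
The proof is by direct case-by-case inspection, exploiting the standard description of $M_nX \subseteq (X_{n-1})^{n+1}$ as the space of tuples $(y_0,\dots,y_n)$ satisfying $d_iy_j = d_{j-1}y_i$ for $i<j$. Unpacking this for $X = N(H,V)$, a point of $M_n$ is a family $(h_{ij},v_{ijk})_{0\le i\le j\le k\le n}$ whose restriction to each subset $[n]\setminus\{\ell\}$ satisfies the defining identities (1)--(3) of an $(n-1)$-simplex of the nerve. The map $\delta_n\colon N_n \ra M_n$ is then the inclusion cut out by exactly those identities of $N(H,V)$ that simultaneously involve \emph{all} $n+1$ indices.

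Items (0), (1), (2), and ($\geq 4$) are essentially immediate from this reformulation. No crossed-module identity involves fewer than three indices, so for $n \leq 2$ the only condition lost in passing from $N_n$ to $M_n$ is identity (2) for the triple $\{0,1,2\}$; rewriting this as $\epsilon(v_{012}) = h_{02}h_{12}^{-1}h_{01}^{-1}$ exhibits $N_2$ as the stated pullback, and the cases $n=0,1$ are trivial. Conversely, when $n \geq 4$ every 3-element or 4-element subset of $[n]$ is contained in some proper face of $[n]$, so both identities (2) and (3) are already enforced by the matching data, and $\delta_n$ is an isomorphism.

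The substantive case is $n=3$. The four 2-faces enforce identity (2) on each triple $\{i,j,k\} \subseteq [3]$, while the quadruple identity (3) is invisible to any 2-face. Parametrizing by the spine $(h_{01},h_{12},h_{23})$ together with the four $v_{ijk}$, the triples $(012)$ and $(123)$ pin down $h_{02}$ and $h_{13}$, while the triples $(013)$ and $(023)$ each furnish a formula for $h_{03}$. The main computational step---and the only genuine obstacle in the proof---is to verify that equating these two resulting formulas, via the crossed-module axiom $h\,\epsilon(v)\,h^{-1}=\epsilon(\alpha(h)(v))$, collapses to $\epsilon(\pi(\ldots))=e$ with $\pi$ exactly as in the statement. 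This exhibits $M_3$ as the pullback of $\Ker \epsilon \hookrightarrow V$ along $\pi$. Imposing identity (3), which in multiplicative form reads $v_{023}v_{012}\,\alpha(h_{01})(v_{123})^{-1}v_{013}^{-1}=e$, then strengthens the constraint to $\pi(\ldots)=e$ in $V$ (rather than merely in $\Ker\epsilon$), producing the upper pullback and completing the description of $\delta_3$.
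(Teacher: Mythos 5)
Your proof is correct and is exactly the direct verification the paper intends (its own proof reads ``Straightforward,'' with a hint about which squares to check in case (3)): the identification of $M_n$ with compatible facewise data $(h_{ij},v_{ijk})$, the observation about which identities live on proper faces, the two competing formulas for $h_{03}$ whose agreement collapses via $h\epsilon(v)h^{-1}=\epsilon(\alpha(h)(v))$ to $\epsilon(\pi(\cdots))=e$, and the resulting factorization through $\Ker\epsilon$ are all as needed. Note that your (correct) computations silently fix two typos in the printed statement: the map $M_2\ra H$ should be $(h_{01},h_{02},h_{12})\mapsto h_{02}h_{12}^{-1}h_{01}^{-1}$, and the lower-left corner of the diagram in (3) should be $H^{\times 3}\times V^{\times 4}$, the actual domain of $\pi$.
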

\begin{proof}
Straightforward.  In (3), one shows directly that the right-hand lower
square, 
bottom rectangle, and left rectangle are pullbacks.
\end{proof}

Recall that a simplicial space $X$ is \dfn{Reedy fibrant} if each of
the maps $\delta_n\colon X_n \ra M_nX$ is a fibration of spaces.
\begin{cor}\label{cor:crossed-nerve-fib}
If $(H,V)$ is a topological crossed module such that $\epsilon$ is a
covering map, then $N(H,V)$ is Reedy fibrant.
\end{cor}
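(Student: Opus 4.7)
The plan is to verify directly that each matching projection $\delta_n\colon N(H,V)_n \to M_n N(H,V)$ is a Serre fibration, taking each case in turn using the explicit descriptions from Proposition \ref{prop:matching-projections}.

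I would first dispatch the easy cases $n = 0$, $n = 1$, and $n \geq 4$: here $\delta_n$ is either an isomorphism or the terminal map $H \to \{*\}$, and in either case is trivially a Serre fibration. For $n = 2$, part (2) of the proposition exhibits $\delta_2$ as a pullback of $\epsilon \colon V \to H$. Since covering maps are Serre fibrations and Serre fibrations are stable under pullback, $\delta_2$ is a fibration.

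The substantive case is $n = 3$. Part (3) identifies $\delta_3$ as the pullback of the inclusion $\{e\} \hookrightarrow \Ker \epsilon$ along the map $M_3 \to \Ker \epsilon$ induced by $\pi$. Because $\epsilon$ is a covering map, $\Ker \epsilon$ is the discrete fiber of $\epsilon$ over the identity, so $\{e\} \hookrightarrow \Ker \epsilon$ is a map between discrete spaces. Any such map is a Serre fibration, since any continuous map from a connected space (in particular $D^k \times [0,1]$) into a discrete space is constant, and so every homotopy lifting problem reduces to lifting a constant map, which is trivial. Pulling back, $\delta_3$ is a fibration.

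The main (and really only) obstacle is the case $n = 3$, where the covering-map hypothesis on $\epsilon$ enters the proof precisely via the discreteness of $\Ker \epsilon$. A weaker hypothesis on $\epsilon$—for instance that it be merely a fibration, or have closed-subgroup kernel—would not guarantee that $\{e\} \hookrightarrow \Ker \epsilon$ is a fibration. With that observation in hand, the rest of the verification is routine.
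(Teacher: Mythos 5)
Your proof is correct and takes essentially the same route as the paper: a case-by-case check of the matching projections from \eqref{prop:matching-projections}, with the covering-map hypothesis entering at $n=2$ (where $\delta_2$ is a pullback of the fibration $\epsilon$) and at $n=3$ (where discreteness of $\Ker\epsilon$ is the key point). The paper phrases the $n=3$ step by observing that $\delta_3$ is an open and closed embedding, which is the same observation as yours that $\{e\}\hookrightarrow\Ker\epsilon$ is a map of discrete spaces and hence pulls back to a Serre fibration.
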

\begin{proof}
Immediate using \eqref{prop:matching-projections}.  Note that the
condition that $\epsilon$ be a covering map in
\eqref{prop:matching-projections}(3) implies that $\delta_3$ is an
open and closed embedding.
\end{proof}
In particular, \eqref{cor:crossed-nerve-fib} applies to our main example
\eqref{exam:toral-crossed-module}. 

\section{Maps between reduced simplicial spaces}
\label{sec:maps-simplicial-spaces}

A simplicial space $X\in s\Top$ is said to be \dfn{reduced} if
$X_0\approx *$.  We write $s\Top^\red\subset s\Top$ for the full
subcategory of reduced simplicial spaces.  Note that reduced simplicial
spaces are canonically based, so that we may in fact regard
$s\Top^\red$ as a full subcategory of simplicial based spaces $s\Top_*$.

\subsection{Realization of reduced simplicial spaces}

We recall the geometric realization functor $\Len{-}\colon s\Top\ra
\Top$, defined so that $\Len{X}$ is the coend of the functor $\Delta^\op\times \Delta\ra
\Top$ given by $([m],[n])\mapsto X_m\times \Delta^n$, where $\Delta^n$
is the topological $n$-simplex.

\begin{prop}\label{prop:realiz-adj}
The restriction of the geometric realization functor to a functor
$\Len{-}\colon s\Top^\red\ra \Top_*$ admits a right adjoint
$\nabla\colon \Top_*\ra s\Top^\red$, defined by 
\[
(\nabla Y)_n := \Map_*(\Delta^n/\Sk_0\Delta^n, Y),
\]
where $\Sk_0\Delta^n\subseteq \Delta^n$ is the set of vertices of the
simplex.  The adjunction is compatible with the topological
enrichment, and so gives a natural homeomorphism
\[
\Map_{s\Top}(X,\nabla Y) \approx \Map_*(\Len{X}, Y)
\]
for $X\in s\Top^\red$ and $Y\in \Top_*$.  
\end{prop}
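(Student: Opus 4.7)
The plan is to reduce to the standard (unreduced, unbased) adjunction between $\Len{-}\colon s\Top\ra\Top$ and $\mathrm{Sing}\colon\Top\ra s\Top$ with $(\mathrm{Sing}\,Y)_n := \Map(\Delta^n,Y)$.  That adjunction follows formally from the coend presentation $\Len{X} = \int^{[n]} X_n\times\Delta^n$ together with the cartesian closed structure of $\Top$, and is automatically a natural \emph{homeomorphism} (not just bijection) via the end computation
\[
\Map(\Len{X},Y) \approx \int_{[n]} \Map(X_n\times\Delta^n,Y) \approx \int_{[n]} \Map(X_n,\Map(\Delta^n,Y)) = \Map_{s\Top}(X,\mathrm{Sing}\,Y).
\]

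Next I would cut this down to the reduced/based setting.  For $X$ reduced, the coend relation identifies $(x, v_*(\ast))\in X_n\times\Delta^n$ with $(v^*x,\ast)\in X_0\times\Delta^0$ for each vertex $v\colon [0]\ra [n]$ of $\Delta^n$; since $X_0\approx\ast$, every such pair is sent to the canonical basepoint of $\Len{X}$.  Hence a continuous map $f\colon \Len{X}\ra Y$ is based if and only if each component $f_n\colon X_n\times\Delta^n\ra Y$ carries $X_n\times\Sk_0\Delta^n$ to the basepoint of $Y$.  By the exponential law in $\Top$, such $f_n$ correspond to maps $X_n\ra \Map_*(\Delta^n/\Sk_0\Delta^n,Y) = (\nabla Y)_n$.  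Since every $\delta\colon [m]\ra [n]$ takes vertices to vertices, $\delta_*\colon \Delta^m\ra \Delta^n$ descends to a based map $\Delta^m/\Sk_0\Delta^m\ra \Delta^n/\Sk_0\Delta^n$; this endows $\nabla Y$ with the structure of a simplicial space, reduced since $(\nabla Y)_0 = \Map_*(\ast,Y) = \ast$, and coend-compatibility of the family $\{f_n\}$ translates precisely into simpliciality of the corresponding map $X\ra \nabla Y$.

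The only point requiring a moment's thought---though I do not regard it as a serious obstacle---is ensuring that the identification ``maps collapsing $X_n\times\Sk_0\Delta^n$'' $\leftrightarrow$ ``based maps out of $\Delta^n/\Sk_0\Delta^n$'' is a homeomorphism and not merely a set bijection.  This follows because $\Map_*(A,Y)$ carries the subspace topology inside $\Map(A,Y)$ by definition, and the quotient $\Delta^n\ra \Delta^n/\Sk_0\Delta^n$ is an identification in CGWH, so applying $\Map(-,Y)$ yields a topological embedding whose image is exactly the subspace of maps factoring through the quotient.  Combining these level-by-level homeomorphisms with the coend/end identification above then delivers the claimed natural homeomorphism $\Map_{s\Top}(X,\nabla Y) \approx \Map_*(\Len{X},Y)$.
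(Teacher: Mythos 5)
Your argument is correct and is essentially the proof the paper intends: the paper packages the same computation by observing that for reduced $X$ the realization is the pointed coend of $([m],[n])\mapsto X_m\wedge(\Delta^n/\Sk_0\Delta^n)$, which is exactly your identification of based maps out of $\Len{X}$ with families of maps $X_n\times\Delta^n\ra Y$ collapsing $X_n\times\Sk_0\Delta^n$. Your extra care with the exponential law, quotient maps, and embeddings in CGWH is precisely the routine verification the paper dismisses as straightforward.
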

\begin{proof}
This is a straightforward consequence of the observation that for
reduced simplicial spaces $X$, $\Len{X}$ is seen to be isomorphic to
the coend (in $\Top_*$) of $([m],[n]) \mapsto X_m\sm
(\Delta^n/\Sk_0\Delta^n)$.   
\end{proof}

We also note the following.
\begin{prop}\label{prop:sing-fib}
For any $Y\in \Top_*$, the simplicial space $\nabla Y$ is Reedy fibrant.
\end{prop}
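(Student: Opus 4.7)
The plan is to identify the matching map $\delta_n\colon (\nabla Y)_n \to M_n(\nabla Y)$ explicitly as a restriction map along a based cofibration, and then invoke the standard fact that $\Map_*(-,Y)$ sends based cofibrations to Serre fibrations (since every space is Serre fibrant).

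First I would set $F_n := \Delta^n/\Sk_0\Delta^n$, so that $\nabla Y = \Map_*(F_\bullet, Y)$ where $F_\bullet$ is regarded as a cosimplicial based space. Because $\Map_*(-, Y)$ converts colimits in its first slot into limits, the matching space admits the description
\[
M_n(\nabla Y) \;=\; \lim_{(\Delta^\op_{[n]/})_{<n}} \Map_*(F_\bullet, Y) \;\approx\; \Map_*\!\bigl(\partial F_n,\, Y\bigr),
\]
where $\partial F_n$ denotes the colimit of the $F_k$ for $k<n$ along the cosimplicial structure maps, i.e.\ the union of the images of the coface maps in $F_n$. For the standard cosimplex this union is exactly $\partial\Delta^n$, so after passing to the quotient by $\Sk_0$ one obtains $\partial F_n = (\partial\Delta^n \cup \Sk_0\Delta^n)/\Sk_0\Delta^n = \partial\Delta^n/\Sk_0\Delta^n$ (and $\partial F_n = *$ when $n=0,1$).

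Under this identification, $\delta_n$ is the map
\[
\Map_*(F_n, Y) \;\lra\; \Map_*(\partial F_n, Y)
\]
induced by the inclusion $\partial F_n \hookrightarrow F_n$. For $n=0$ and $n=1$ the target is a one-point space, so there is nothing to check. For $n \geq 2$, the inclusion $\partial\Delta^n/\Sk_0\Delta^n \hookrightarrow \Delta^n/\Sk_0\Delta^n$ is an inclusion of a based CW subcomplex, in particular a cofibration in $\Top_*$.

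Finally, since $Y$ is fibrant (every object of $\Top$ is), the standard enrichment property of the based model structure—equivalently, applying $\Map_*(-, Y)$ to the generating cofibrations $S^{n-1}_+ \hookrightarrow D^n_+$ after factoring the CW inclusion through its cell attachments—implies that $\delta_n$ is a Serre fibration. Hence each matching projection is a fibration, which is exactly the Reedy fibrancy criterion. The only subtlety is bookkeeping around the $\Sk_0$ quotient for small $n$, which is handled by the case distinction above.
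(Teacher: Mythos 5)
Your proof is correct and is essentially the paper's argument: the paper likewise identifies the matching projection with the restriction map $\Map_*(\Delta^n/\Sk_0\Delta^n,Y)\to\Map_*(\partial\Delta^n/\Sk_0\Delta^n,Y)$ and declares it a fibration. You have merely filled in the routine details (colimit-to-limit conversion under $\Map_*(-,Y)$, the cofibration property of the boundary inclusion, and the small-$n$ cases), which the paper leaves implicit.
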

\begin{proof}
The matching space projection has the form $\Map_*(\Delta^n/\Sk_0
\Delta^n,Y) \ra \Map_*(\partial \Delta^n/\Sk_0 \Delta^n,Y)$, which is
clearly a fibration.  
\end{proof}

For a topological group $H$, we consider the classifying space $BH:=
\Len{NH}$.
\begin{prop}\label{prop:unit-is-we}
If $H$ is a topological group with identity element a non-degenerate
basepoint (i.e., $\{e\}\ra H$ has the HEP), then the map
\[
\eta\colon NH \ra \nabla \Len{NH} = \nabla BH
\]
given by the unit map of the adjunction of \eqref{prop:realiz-adj} is
a levelwise weak equivalence of simplicial spaces.
\end{prop}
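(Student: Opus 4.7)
The plan is to reduce the statement, level by level, to the classical fact that $\eta_1\colon H \to \Omega BH$ is a weak equivalence whenever $\{e\}\hookrightarrow H$ has the HEP. At $n=0$ both sides are a point, so there is nothing to check.

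For $n\geq 1$, I would exploit the geometry of $\Delta^n/\Sk_0\Delta^n$. Consider the \emph{spine} $Y_n := \bigcup_{i=1}^n [0,i] \subseteq \Delta^n$, the union of the $n$ edges emanating from vertex $0$. It is a contractible CW subcomplex of the contractible space $\Delta^n$ containing $\Sk_0\Delta^n$, so $Y_n \hookrightarrow \Delta^n$ is a cofibration and a homotopy equivalence; passing to the quotient by $\Sk_0\Delta^n$ gives a based homotopy equivalence
\[
\textstyle \bigvee_{i=1}^n S^1 \;\cong\; Y_n/\Sk_0\Delta^n \;\xrightarrow{\sim}\; \Delta^n/\Sk_0\Delta^n.
\]
Applying $\Map_*(-,BH)$ then yields a weak equivalence $(\nabla BH)_n \xrightarrow{\sim} (\Omega BH)^n$.

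Next, I would identify the composite of $\eta_n$ with this equivalence. For an $n$-simplex $(h_1,\dots,h_n)\in (NH)_n = H^n$, the adjoint map $\Delta^n/\Sk_0\Delta^n \to BH$, restricted along the edge from vertex $0$ to vertex $i$, is the loop in $BH$ represented by the $1$-simplex of $NH$ indexed by the partial product $h_1 h_2\cdots h_i$. Hence the composite
\[
H^n \xrightarrow{\eta_n} (\nabla BH)_n \xrightarrow{\sim} (\Omega BH)^n
\]
sends $(h_1,\dots,h_n)$ to $(\eta_1(h_1),\,\eta_1(h_1 h_2),\,\dots,\,\eta_1(h_1\cdots h_n))$. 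Precomposing with the self-homeomorphism $\varphi\colon H^n\to H^n$, $\varphi(h_1,\dots,h_n) := (h_1,\,h_1 h_2,\,\dots,\,h_1\cdots h_n)$ (whose inverse uses inversion in $H$), this composite becomes exactly the product map $(\eta_1)^{\times n}$, so $\eta_n$ is a weak equivalence if and only if $\eta_1$ is.

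It therefore remains to verify the case $n=1$: that $\eta_1\colon H\to \Omega BH$ is a weak equivalence. This is the classical loop-space identification, known to hold under the cofibrancy hypothesis on $\{e\}\hookrightarrow H$ (e.g., via the Milnor universal principal $H$-bundle $EH\to BH$, or via the Segal condition for good simplicial spaces), and I would simply invoke it. I expect this to be the only genuine analytic input; the main bookkeeping lies in checking that the spine retraction intertwines $\eta_n$ with $(\eta_1)^{\times n}$ up to the change of coordinates $\varphi$, after which no further work is needed.
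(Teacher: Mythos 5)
Your proposal is correct and follows essentially the same route as the paper: reduce to the classical equivalence $\eta_1\colon H\to\Omega BH$ (valid under the non-degenerate basepoint hypothesis) by mapping out of a contractible $1$-dimensional subcomplex of $\Delta^n$ containing the vertices. The only cosmetic difference is that the paper uses the path of consecutive edges $\{k-1,k\}$, which makes the composite literally $(\eta_1)^{\times n}$, whereas your star of edges $[0,i]$ requires the coordinate change $\varphi$ that you correctly supply.
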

\begin{proof}
In degree $0$, $\eta$ is the isomorphism of one-point spaces.  In
degree $1$ it has the form 
\[
H \ra \Map_*(\Delta^1/\{0,1\}, \Len{NH}) \approx \Omega BH.
\]
A standard argument (e.g., using the usual simplicial model for the
universal fibration \cite{may-classifying-fibrations}) shows that this
is a weak equivalence.  

For $n\geq2$, we reduce to the $n=1$ case using the fact that
$I_n/\Sk_0\Delta^n\ra \Delta^n/\Sk_0\Delta^n$ is a homotopy
equivalence of pointed spaces, and thus
\[
\Map_*(\Delta^n/\Sk_0\Delta^n, \Len{NH}) \ra \Map_*(I_n/\Sk_0\Delta^n,
\Len{NH}) \approx (\Omega BG)^{\times n}
\]
is a weak equivalence, where $I_n\subseteq \Delta^n$ is the union of
the edges with vertices $\{k-1,k\}$ for all $k=1,\dots,n$.
\end{proof}

\subsection{$\Map(X, N(H,V))$ computes $\Map_*(\Len{X}, BH)$}

Now we fix a 1-truncated compact Lie group $H$ and the corresponding
exponential  crossed module
$(H,V)$ of \eqref{exam:toral-crossed-module}. 
We have a map of reduced simplicial spaces
\[
NH \xra{(\iota,\eta)} N(H,V)\times \nabla \Len{NH}
\]
in which $\iota$ is the evident inclusion $NH=N(H,0)\subseteq N(H,V)$,
and $\eta$ the unit map of the adjunction of
\eqref{prop:realiz-adj}.   Observe that both $\iota$ and $\eta$ are
levelwise weak equivalences ($\iota$ because $V$ is contractible, $\eta$ by
\eqref{prop:unit-is-we}).  Furthermore,  both $N(H,V)$
\eqref{cor:crossed-nerve-fib}   and $\nabla
\Len{NH}$ \eqref{prop:sing-fib} are Reedy fibrant.

Using the Reedy model structure on
simplicial spaces, we can factor the above map as 
\begin{equation}\label{eq:Reedy-factorization}
NH \xra{j} (NH)^f \xra{(\iota',\eta')} N(H,V)\times \nabla \Len{NH}
\end{equation}
so that $(NH)^f$ is Reedy fibrant and $j$ is a levelwise weak
equivalence, whence $\iota'$ and $\eta'$ are also levelwise weak
equivalences.  

\begin{prop}\label{prop:mapping-space-equiv}
For $X$ a Reedy cofibrant simplicial space with $X_0=*$, and $(H,V)$
the exponential crossed module of a 1-truncated compact Lie group $H$,
we have that  
$\Map_{s\Top}(X,N(H,V))$ is weakly equivalent to $\Map_*(\Len{X}, 
 \Len{NH})$.  Furthermore, $\iota_*\colon \Map_{s\Top}(X,NH)\ra
\Map_{s\Top}(X,N(H,V))$ is a weak equivalence of spaces if and only if
$\eta_*\colon \Map_{s\Top}(X,NH)\ra \Map_{s\Top}(X,\nabla \Len{NH})$
is. 
\end{prop}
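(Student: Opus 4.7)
My plan is to apply $\Map_{s\Top}(X, -)$ to the Reedy factorization \eqref{eq:Reedy-factorization} and then invoke the topological enrichment of the Reedy model structure: since $X$ is Reedy cofibrant, this functor sends levelwise weak equivalences between Reedy fibrant simplicial spaces to weak equivalences of spaces. The whole argument thus reduces to checking Reedy fibrancy of the targets appearing in the factorization.

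First I would observe that the three simplicial spaces $(NH)^f$, $N(H,V)$, and $\nabla\Len{NH}$ are all Reedy fibrant: the first by construction; the second by \eqref{cor:crossed-nerve-fib}, applicable because $\exp\colon V\to H$ is a covering map; and the third by \eqref{prop:sing-fib}. Since $\iota'$ and $\eta'$ in \eqref{eq:Reedy-factorization} are levelwise weak equivalences between Reedy fibrant objects, applying $\Map_{s\Top}(X,-)$ yields a zigzag of weak equivalences
\[
\Map_{s\Top}(X, N(H,V)) \xleftarrow{\iota'_*} \Map_{s\Top}(X, (NH)^f) \xrightarrow{\eta'_*} \Map_{s\Top}(X, \nabla\Len{NH}).
\]
The adjunction of \eqref{prop:realiz-adj}, which applies because $X$ is reduced, identifies the right-hand term with $\Map_*(\Len{X}, \Len{NH})$, establishing the first assertion.

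For the second assertion, the identities $\iota = \iota'\circ j$ and $\eta = \eta'\circ j$ give $\iota_* = \iota'_*\circ j_*$ and $\eta_* = \eta'_*\circ j_*$. Since $\iota'_*$ and $\eta'_*$ are weak equivalences by the previous paragraph, the two-out-of-three property implies that each of $\iota_*$ and $\eta_*$ is a weak equivalence if and only if $j_*$ is, hence they are simultaneously weak equivalences.

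There is no real technical obstacle once the Reedy machinery is in place: the proof is essentially formal bookkeeping around \eqref{eq:Reedy-factorization}. The conceptual point worth highlighting is that $NH$ itself is not Reedy fibrant for a general non-discrete topological group $H$ (for instance the matching projection $(NH)_2 = H^2 \to M_2(NH) = H^3$ is typically only a closed embedding, not a fibration), which is precisely why one introduces the Reedy fibrant replacement $(NH)^f$ in \eqref{eq:Reedy-factorization} and compares mapping spaces through it rather than directly against $NH$.
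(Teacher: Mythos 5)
Your proposal is correct and follows essentially the same route as the paper: apply $\Map_{s\Top}(X,-)$ to the factorization \eqref{eq:Reedy-factorization}, use the topological enrichment of the Reedy model structure together with the fibrancy of $N(H,V)$, $\nabla\Len{NH}$, and $(NH)^f$ and the cofibrancy of $X$, and identify $\Map_{s\Top}(X,\nabla\Len{NH})$ with $\Map_*(\Len{X},\Len{NH})$ via the adjunction \eqref{prop:realiz-adj}. The two-out-of-three bookkeeping via $j_*$ for the second assertion, and your closing observation that $NH$ itself fails to be Reedy fibrant for non-discrete $H$, are exactly the points the paper leaves implicit in its ``straightforward'' proof.
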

\begin{proof}
Straightforward using the factorization
\eqref{eq:Reedy-factorization}, the fact that
Reedy model structure is compatible with the topological enrichment,
and the adjunction \eqref{prop:realiz-adj}. 
\end{proof}

\begin{cor}\label{cor:reduction-to-nerve}
If $(H,V)$ is as above, and  $G$ is a topological group such that
$\{e\}\ra G$ is a cofibration, 
then  $\Map_*(BG,BH)$ is weakly equivalent to $\Map_{s\Top}(NG,
N(H,V))$, and 
\[
B\colon \Hom(G,H) \ra \Map_*(BG,BH)
\]
is a weak equivalence if and only if 
\[
\iota_*\colon \Map_{s\Top}(NG,NH)\ra \Map_{s\Top}(NG, N(H,V))
\]
is a weak equivalence.
\end{cor}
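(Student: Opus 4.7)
The plan is to deduce the corollary almost directly from Proposition~\eqref{prop:mapping-space-equiv} applied to $X = NG$, after verifying hypotheses and reinterpreting the maps that appear there. First I would note that $NG$ is reduced (since $(NG)_0 = *$) and Reedy cofibrant, the latter by Proposition~\eqref{prop:latching-nerve-of-group} combined with the hypothesis that $\{e\}\ra G$ is a cofibration. The first conclusion of \eqref{prop:mapping-space-equiv} then immediately yields the weak equivalence $\Map_{s\Top}(NG, N(H,V)) \simeq \Map_*(\Len{NG}, BH) = \Map_*(BG, BH)$, which is the first assertion.

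For the ``if and only if'' statement, the essential ingredient is the homeomorphism $\Map_{s\Top}(NG, NH) \approx \Hom(G,H)$. A simplicial map $NG \ra NH$ is determined by its degree-$1$ component $f\colon G \ra H$; the simplicial identity in degree $2$ expressing compatibility with $d_1$ forces $f(gg') = f(g)f(g')$, so $f$ is a continuous group homomorphism, and the topology on the simplicial mapping space agrees with the compact-open topology on $\Hom(G,H)$. Under this identification, the composite
\[
\Hom(G,H) \approx \Map_{s\Top}(NG, NH) \xra{\eta_*} \Map_{s\Top}(NG, \nabla BH) \approx \Map_*(BG, BH),
\]
in which the last homeomorphism is the adjunction of \eqref{prop:realiz-adj}, agrees with the classifying map $B$ by naturality of geometric realization (one checks this on the universal example $G \ra G$, where $\eta_*$ corresponds by adjunction to the canonical map $\Len{NG}\ra BG=\Len{NG}$). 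Consequently, $B$ is a weak equivalence if and only if $\eta_*$ is, which by \eqref{prop:mapping-space-equiv} is equivalent to $\iota_*$ being a weak equivalence.

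The only step that calls for care is matching $B$ with $\eta_*$ under the two homeomorphisms, but this is a direct unwinding of the definitions of realization, the adjunction, and the unit map $\eta$, so I do not expect a serious obstacle.
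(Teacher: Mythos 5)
Your proposal is correct and follows essentially the same route as the paper: apply \eqref{prop:mapping-space-equiv} to $X=NG$ (Reedy cofibrant by \eqref{prop:latching-nerve-of-group} and the cofibration hypothesis), identify $\Map_{s\Top}(NG,NH)\approx\Hom(G,H)$ by evaluation in degree $1$, and match $B$ with $\eta_*$ under the adjunction of \eqref{prop:realiz-adj}. Your extra care in checking that $B$ corresponds to $\eta_*$ (via the triangle identity for the adjunction) is exactly the ``straightforward'' step the paper leaves implicit, so there is nothing to add.
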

\begin{proof}
Use \eqref{prop:mapping-space-equiv} with $X=NG$, which is Reedy
cofibrant by \eqref{prop:latching-nerve-of-group}. 
It is straightforward to see that $\Map_{s\Top}(NG,NH)\ra \Hom(G,H)$
(evaluation at spaces in degree 1) 
is a homeomorphism, and so the map $B$ coincides with $\iota_*$.
\end{proof}

\begin{rem}
If $H$ is a discrete group, then $NH$ is already Reedy fibrant, in
which case we can 
immediately derive the well-known fact that $B\colon \Hom(G,H)\ra
\Map_*(BG,BH)$ is a weak equivalence for any such topological group $G$.
\end{rem}

\section{Proof of \eqref{thm:main}: based mapping space}
\label{sec:proof-based}

As above, we assume that $H$ is a 1-truncated compact Lie group.  We
will now also assume 
that $G$ is a compact Lie group.  By \eqref{cor:reduction-to-nerve},
we have reduced \eqref{thm:main} to showing that
$\Map_{s\Top}(NG,NH)\ra \Map_{s\Top}(NG, N(H,V))$ is a weak equivalence.

Let $E:=\Map_{s\Top}(NG, N(H,V))$.  Using
\eqref{prop:matching-projections} and the identification of the
latching inclusions $L_nNG\ra G^n$
\eqref{prop:latching-nerve-of-group}, we see that $E$ is precisely the
space of pairs 
\[
(\zeta,\nu)\in \Map(G,H)\times \Map(G\times G, V)
\]
such that
\begin{enumerate}
\item $\zeta(e)=e$ and $\nu(g,e)=0=\nu(e,g)$ for $g\in G$,
\item $\zeta(g_1g_2) = \exp[\nu(g_1,g_2)]\zeta(g_1)\zeta(g_2)$ for
  $g_1,g_2\in G$,
\item $\nu(g_1g_2,g_3)+\nu(g_1,g_2) =
  \nu(g_1,g_2g_3)+\ad(\zeta(g_1))[\nu(g_2,g_3)]$ for $g_1,g_2,g_3\in G$.
\end{enumerate}
Explicitly, this corresponds to the map $NG\ra N(H,V)$ which (in the
notation of \S\ref{subsec:crossed-module}) sends
$(g_{ij})\in (NG)_n$ to $(h_{ij}, v_{ijk})\in N(H,V)_n$ with
$h_{ij}=\zeta(g_{ij})$ and $v_{ijk} = \nu(g_{ij},g_{jk})$.  

Let $E^0:=\Map_{s\Top}(NG,NH)$.  The map $E^0\ra E$ is precisely
inclusion into the subspace consisting of points of the form
$(\zeta,0)$.  

For a continuous map $\zeta\colon G\ra H$, we write $\ol\zeta\colon
G\ra H/H_0$ for the composite with the quotient map $H\ra H/H_0$.  
Note that if $(\zeta,\nu)\in E$, then  $\ol\zeta$ 
is a 
continuous homomorphism of groups.  Since $H/H_0$ is discrete, we obtain
coproduct decompositions
\[
E=\coprod_\gamma E_\gamma, \qquad
E^0=\coprod_\gamma E^0_\gamma, \qquad \gamma\in \Hom(G,H/H_0).
\]
Thus, we must show that for each such $\gamma$, the inclusion
$E^0_\gamma\subseteq E_\gamma$ is a weak equivalence.  In fact, we can give
an explicit (strong) deformation retraction of $E_\gamma$ to 
$E^0_\gamma$, which  relies on the existence of a contracting
homotopy of the complex $C^*(G,V_{\ad\gamma})$ of normalized continuous
cochains on $G$ with 
values in the representation $\ad\gamma\colon G\ra \Aut(V)$, which may  be
constructed explicitly  using an invariant measure on the compact group $G$.
We spell out the details we need below.

Fix $\gamma\in \Hom(G,H/H_0)$.  Let $C^1_\gamma\subseteq \Map(G,V)$ be
the subspace of functions $\mu\colon G\ra V$ such that 
\[
\mu(e)=0.
\]
Let $Z^2_\gamma\subseteq \Map(G\times G,V)$ be the subspace of
functions $\nu\colon G\times G\ra V$ such that
\[
\nu(g,e)=0=\nu(e,g),\qquad g\in G,
\]
and 
\[
\nu(g_1g_2,g_3) + \nu(g_1,g_2) = \nu(g_1,g_2g_3) +
\ad\gamma(g_1)[\nu(g_2,g_3)],\qquad g_1,g_2,g_3\in G.
\]
Both $Z^2_\gamma$ and $C^1_\gamma$ are topological real vector spaces.  
Define continuous and linear maps
\[
d\colon C^1_\gamma\ra Z^2_\gamma,\qquad H\colon Z^2_\gamma\ra C^1_\gamma
\]
by 
\begin{align*}
d\mu(g_1,g_2) &:= \mu(g_1) - \mu(g_1g_2) + \ad\gamma(g_1)\mu(g_2), 
\\
H\nu(g) &:= \int_G x^{-1} \nu(x,g)\,dx,
\end{align*}
where we use right-invariant Haar measure
 on $G$ normalized so that $\int_Gdx=1$.
\begin{lemma}\label{lemma:contracting-homotopy}
The composite $dH\colon Z^2_\gamma\ra Z^2_\gamma$ is the identity
map. 
\end{lemma}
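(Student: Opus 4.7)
The plan is to verify $d(H\nu)(g_1,g_2) = \nu(g_1,g_2)$ pointwise for an arbitrary $\nu \in Z^2_\gamma$. Unwinding definitions (interpreting the notation $x^{-1}v$ as $\ad\gamma(x^{-1})[v]$),
\begin{align*}
d(H\nu)(g_1,g_2) &= \int_G \ad\gamma(x^{-1})[\nu(x,g_1)]\,dx - \int_G \ad\gamma(x^{-1})[\nu(x,g_1g_2)]\,dx \\
&\qquad {}+ \ad\gamma(g_1)\int_G \ad\gamma(x^{-1})[\nu(x,g_2)]\,dx,
\end{align*}
so the task is to collapse these three integrals into the single quantity $\nu(g_1,g_2)$.

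The key step is to apply the 2-cocycle identity defining $Z^2_\gamma$ with arguments $(x,g_1,g_2)$, which reads
\[
\nu(xg_1,g_2) + \nu(x,g_1) = \nu(x,g_1g_2) + \ad\gamma(x)[\nu(g_1,g_2)].
\]
Solving for $\nu(x,g_1) - \nu(x,g_1g_2)$, applying $\ad\gamma(x^{-1})$, and integrating against the normalized Haar measure converts the difference of the first two integrals in the expression for $d(H\nu)(g_1,g_2)$ into
\[
\nu(g_1,g_2) - \int_G \ad\gamma(x^{-1})[\nu(xg_1,g_2)]\,dx,
\]
since $\ad\gamma(x^{-1})\ad\gamma(x) = \id$ and $\int_G dx = 1$.

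The remaining step is a change of variables in this last integral. Setting $y := xg_1$, right-invariance of Haar measure gives $dy = dx$, and the integrand becomes $\ad\gamma(g_1)\ad\gamma(y^{-1})[\nu(y,g_2)]$. Pulling the constant automorphism $\ad\gamma(g_1)$ outside the integral identifies this term with $\ad\gamma(g_1) H\nu(g_2)$, which exactly cancels the corresponding summand of $d(H\nu)(g_1,g_2)$, leaving $\nu(g_1,g_2)$ as required. The only things to be careful about are (i) specializing the cocycle identity with the integration variable $x$ in the first slot (rather than the second or third) and (ii) using right-invariance, not left-invariance, in the substitution $y = xg_1$; once those choices are made there is no genuine obstacle, and no interchange or differentiation of integrals is needed.
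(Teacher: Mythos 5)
Your proof is correct and follows the same route as the paper: expand $dH\nu$, apply the cocycle identity at the triple $(x,g_1,g_2)$ to eliminate $\nu(x,g_1g_2)$, and cancel the remaining two integrals via the right-invariant substitution $y=xg_1$ together with the normalization $\int_G dx=1$. No substantive differences.
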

\begin{proof}
For $g\in G$ and $v\in V$ we write ``$gv$'' for $\ad\gamma(g)(v)$
below.  Given $\nu\in Z^2_\gamma$ we have
\begin{align*}
  dH\nu(g_1,g_2) & = \int_G x^{-1}\nu(x,g_1) - x^{-1}\nu(x,g_1g_2) +
                   g_1x^{-1}\nu(x,g_2)\,dx
\\
&= \int_G x^{-1}\nu(x,g_1)
  -x^{-1}[\nu(xg_1,g_2)+\nu(x,g_1)-x\nu(g_1,g_2)]+g_1x^{-1}\nu(x,g_2)\,dx
\\
&= \nu(g_1,g_2) - \int_G g_1(xg_1)^{-1}\nu(xg_1,g_2)\,dx + \int_G
  g_1x^{-1}\nu(x,g_2)\,dx = \nu(g_1,g_2),
\end{align*}
the last cancellation by right-invariance of the measure.
\end{proof}

\begin{prop}\label{prop:deformation-retraction}
The inclusion $E^0_\gamma\subseteq E_\gamma$ admits a strong
deformation retraction.
\end{prop}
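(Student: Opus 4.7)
The plan is to write down an explicit strong deformation retraction $r\colon E_\gamma\times[0,1]\to E_\gamma$ built from the contracting homotopy $H$ of \eqref{lemma:contracting-homotopy}. The key preliminary observation is that for any $(\zeta,\nu)\in E_\gamma$, the function $\nu$ actually lies in $Z^2_\gamma$: the cocycle identity defining $E_\gamma$ involves $\ad(\zeta(g_1))$, but by feature (1) of the exponential crossed module \eqref{exam:toral-crossed-module}, $\ad\colon H\to\Aut(V)$ factors through $H/H_0$, and since $\ol\zeta=\gamma$ we have $\ad(\zeta(g_1))=\ad\gamma(g_1)$. Hence \eqref{lemma:contracting-homotopy} supplies $\nu=d(H\nu)$.

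Setting $\mu:=H\nu\in C^1_\gamma$, I would define
\[
r_t(\zeta,\nu)\;:=\;\bigl(\zeta_t,\,(1-t)\nu\bigr),\qquad \zeta_t(g)\;:=\;\exp\!\bigl[t\mu(g)\bigr]\,\zeta(g),\qquad t\in[0,1].
\]
At $t=0$ this is the identity on $E_\gamma$; at $t=1$ the second coordinate vanishes so $r_1$ lands in $E^0_\gamma$; and if $(\zeta,\nu)=(\zeta,0)\in E^0_\gamma$ then $H\nu=0$, so $r_t(\zeta,0)=(\zeta,0)$ for all $t$. Joint continuity in $(\zeta,\nu,t)$ is automatic, since $H$ is continuous and linear and the crossed-module structure maps are continuous in $\Top$.

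The substantive step is to verify that $r_t(\zeta,\nu)\in E_\gamma$ for every $t\in[0,1]$. The normalization conditions are immediate: $H\nu(e)=\int_G x^{-1}\nu(x,e)\,dx=0$ because $\nu(\cdot,e)=0$, so $\zeta_t(e)=e$, and $(1-t)\nu$ vanishes on the two axes. Since $\exp(V)\subseteq H_0$ lies in the kernel of $H\to H/H_0$, one has $\ol{\zeta_t}=\gamma$, so the cocycle identity for $(1-t)\nu$ with respect to $\ad(\zeta_t(g_1))=\ad\gamma(g_1)$ reduces to the corresponding identity for $\nu\in Z^2_\gamma$. The remaining multiplicativity relation $\zeta_t(g_1g_2)=\exp\!\bigl[(1-t)\nu(g_1,g_2)\bigr]\zeta_t(g_1)\zeta_t(g_2)$ is checked by expanding both sides, commuting $\exp[t\mu(g_2)]$ past $\zeta(g_1)$ via $\zeta(g_1)\exp(v)\zeta(g_1)^{-1}=\exp(\ad\gamma(g_1)v)$, and collecting exponentials using the abelianness of $H_0$; after this, the equation reduces exactly to the identity $t\,d\mu=t\nu$, which is \eqref{lemma:contracting-homotopy}.

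I expect this final multiplicativity check to be the main obstacle, not because any individual step is deep but because it is the single place where three distinct structural features of \eqref{exam:toral-crossed-module} — the abelianness of $H_0$, the factorization of $\ad$ through $H/H_0$, and the identity $dH=\id$ on $Z^2_\gamma$ — must all be combined. The rest is essentially formal bookkeeping.
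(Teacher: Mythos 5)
Your proposal is correct and is essentially the paper's own proof: the paper defines $K_t(\zeta,\nu)=(\zeta_t,\nu_t)$ with $\zeta_t(g)=\exp[t\,H\nu(g)]\zeta(g)$ and $\nu_t=\nu-t\,dH\nu$, which by \eqref{lemma:contracting-homotopy} coincides with your $(1-t)\nu$, and your preliminary observation that $\nu\in Z^2_\gamma$ (via $\ad$ factoring through $H/H_0$) together with the multiplicativity check are exactly the verifications the paper leaves implicit.
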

\begin{proof}
Define $K_t\colon E_\gamma\ra E_\gamma$ for $0\leq t\leq 1$ by
$K_t(\zeta,\nu):= (\zeta_t,\nu_t)$, with
\begin{align*}
  \zeta_t(g) &:= \exp[ t\,H\nu(g)] \zeta(g),
\\
  \nu_t(g_1,g_2) &:= \nu(g_1,g_2) - t\, dH\nu(g_1,g_2).
\end{align*}
We have $K_0=\id_{E_\gamma}$, $K_t|E^0_\gamma=\id_{E^0_\gamma}$, and
$K_1(E_\gamma)\subseteq E^0_\gamma$ as desired, the last using
\eqref{lemma:contracting-homotopy}. 
\end{proof}

The proof of \eqref{thm:main} follows, using
\eqref{prop:mapping-space-equiv} and the remarks above.

\begin{rem}
If $H$ is an abelian group, then $\ad\colon H\ra \Aut(V)$ is trivial.
In this case, the proof of \eqref{prop:deformation-retraction}
directly gives a deformation retraction of $E^0\subseteq E$.
\end{rem}

\section{Proof of \eqref{thm:unbased}: Unbased mapping space} 
\label{sec:proof-unbased}

Given simplicial spaces $X$ and $Y$, we have an internal function object
$\eMap(X,Y)\in s\Top$, characterized so that $\eMap(X,-)$ is the right
adjoint to $(-)\times X$.  We have that
\[
\eMap(X,Y)_n = \Map_{s\Top}(X\times N[n], Y),
\]
where $[n]$ is the $n$-arrow category.  In particular,
$\eMap(X,Y)_0\approx \Map_{s\Top}(X,Y)$.  

Formation of the internal function object is compatible with
realization: there are canonical maps
\begin{equation}\label{eq:reaz-function}
\rho \colon \Len{\eMap(X,Y)} \ra \Map(\Len{X},\Len{Y})
\end{equation}
natural in $X$ and $Y$.  This map exists exactly because the
realization functor $\Len{-}\colon s\Top\ra \Top$ preserves finite
products, and is characterized as the map adjoint to
\[
\Len{\eMap(X,Y)}\times \Len{X} \xla{\sim} \Len{\eMap(X,Y)\times X}
\xra{\len{\mathrm{eval}}} \Len{Y}.  
\]

Given topological groups $G$ and $H$, we consider the function object
$\eMap(NG,NH)$.  We have an evident isomorphism
\[
\eMap(NG,NH) \approx N\Fun(G,H), 
\]
where $\Fun(G,H)$ is the internal category in $\Top$ of functors and
natural transformations from $G$ to $H$.  Explicitly, this has
\begin{itemize}
\item objects $\phi\in \Hom(G,H)$,  and
\item morphisms $\phi_0 \xra{h} \phi_1$ where $h\in H$,
  $\phi_1=h\phi_0h^{-1}$. ,
\end{itemize}
and thus homeomorphisms
$N\Fun(G,H)_n=\eMap(NG,NH)_n = \Hom(G,H)\times H^{\times n}$.  

Write $(H\curvearrowright H)$ for the translation category of the left
action of $H$ on itself, viewed as a category object in $\Top$.  This
has 
\begin{itemize}
\item objects $h_0\in H$, 
\item morphisms $h_0\xra{h} h_1$ where $h\in H$, $h_1=hh_0$.
\end{itemize}
We have homeomorphisms $N(H\curvearrowright H)_n= H^{\times (n+1)}$.
The group $H$ acts on the category $(H\curvearrowright H)$ by
$\delta\cdot h_0=h_0\delta^{-1}$ (on objects) and $\delta\cdot
(h_0\xra{h} h_1) = h_0\delta^{-1}\xra{h} h_1\delta^{-1}$ (on
morphisms), where $\delta\in H$.  
 
We let 
$EH:= \Len{N(H\curvearrowright H)}$, a contractible $H$ space with
free $H$-action.

\begin{lemma}\label{lemma:realiz-of-functor-cat}
There is a homeomorphism $\Len{\eMap(NG,NH)}\approx (\Hom(G,H)\times
EH)/H$, where $H$ acts on $\Hom(G,H)$ by conjugation.
\end{lemma}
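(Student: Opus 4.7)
The plan is to identify $\eMap(NG,NH)\approx N\Fun(G,H)$ as the nerve of the translation category of the conjugation action of $H$ on $\Hom(G,H)$, and then to use that the nerve of such a translation category, after realization, computes the topological Borel construction.

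First I would observe that the internal category $\Fun(G,H)$ is naturally isomorphic to the translation category $(H\curvearrowright \Hom(G,H))$ of the conjugation action: both have object space $\Hom(G,H)$ and morphism space $\Hom(G,H)\times H$, with a morphism $\phi_0\to\phi_1$ being an element $h\in H$ satisfying $\phi_1=h\phi_0 h^{-1}$. Under this identification, $N\Fun(G,H)_n\approx \Hom(G,H)\times H^n$, with an $n$-simplex recorded as a starting object $\phi_0$ together with the string of $n$ morphisms.

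Next I would construct a natural map of simplicial spaces
\[
\Phi\colon \Hom(G,H)\times N(H\curvearrowright H) \ra N\Fun(G,H),
\]
which in degree $n$ sends $(\phi,h_0,\ldots,h_n)$ to the $n$-simplex
\[
h_0\phi h_0^{-1}\xra{h_1 h_0^{-1}} h_1\phi h_1^{-1}\xra{h_2 h_1^{-1}}\cdots \xra{h_n h_{n-1}^{-1}} h_n\phi h_n^{-1}.
\]
A direct check shows that $\Phi$ is $H$-invariant for the diagonal $H$-action on the source (conjugation on $\Hom(G,H)$ together with the right action on $N(H\curvearrowright H)$ defined in the paper): the substitution $(\phi,h_i)\mapsto (\delta\phi\delta^{-1},h_i\delta^{-1})$ leaves both the vertices $h_i\phi h_i^{-1}$ and the edges $h_i h_{i-1}^{-1}$ unchanged. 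Because the $H$-action on $N(H\curvearrowright H)_n = H^{n+1}$ is free, with every orbit having a unique representative satisfying $h_0=e$, the map $\Phi$ descends to a levelwise homeomorphism
\[
\bar\Phi\colon \bigl(\Hom(G,H)\times N(H\curvearrowright H)\bigr)/H \xra{\sim} N\Fun(G,H),
\]
with inverse sending $(\phi_0,k_1,\ldots,k_n)$ to the class of $(\phi_0,e,k_1,k_2 k_1,\ldots,k_n\cdots k_1)$.

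Finally, apply geometric realization. Since $\Len{-}\colon s\Top\ra\Top$ preserves finite products (in CGWH) and colimits, and the constant simplicial space $\Hom(G,H)$ realizes to itself, we obtain
\[
\Len{\eMap(NG,NH)}\approx \Len{\bigl(\Hom(G,H)\times N(H\curvearrowright H)\bigr)/H}\approx \bigl(\Hom(G,H)\times EH\bigr)/H,
\]
as desired. The main technical point is commuting the geometric realization with the quotient by $H$; this is unproblematic because the $H$-action on $N(H\curvearrowright H)$ is free at each level, so the same is true of the diagonal action on the product, and the simplicial quotient realizes to the ordinary topological quotient.
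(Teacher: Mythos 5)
Your proof is correct, and it is essentially the paper's own identification: your functor $\Hom(G,H)\times(H\curvearrowright H)\to\Fun(G,H)$, $(\phi,h_0)\mapsto h_0\phi h_0^{-1}$, is exactly the composite of the isomorphism $C'\to C$ with the projection $C\to\Fun(G,H)$ that the paper uses in \S\ref{sec:proof-equivariant}, where the same facts ($H$ acts freely, $\Len{NC}\approx\Hom(G,H)\times EH$, and the quotient by $H$ gives $\Len{N\Fun(G,H)}$) yield the stated homeomorphism. Your closing remarks on commuting realization with the quotient and on the slice $\{h_0=e\}$ fill in precisely the details the paper leaves implicit.
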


\begin{proof}[Proof of \eqref{thm:unbased}]
We have a commutative diagram
\[\xymatrix{
{\Len{\eMap(NG,NH)}} \ar[r]^-{\rho} \ar[d]_{\alpha}
& {\Map(\Len{NG},\Len{NH})} \ar[d]^{\beta}
\\
{\Len{\eMap(*,NH)}} \ar[r]_-{\approx}
& {\Map(\Len{*}, \Len{NH})}
}\]
where the vertical maps are induced by restriction along $*\ra NG$,
and the lower horizontal map is the evident homeomorphism (both source
and target are homeomorphic to $BH$). 
We claim that 
$\rho$ is a weak equivalence.

By
\eqref{lemma:realiz-of-functor-cat} we see that $\alpha\colon
(\Hom(G,H)\times EH)/H \ra BH$ is a fiber bundle with fiber $\Hom(G,H)$.
Since $\beta\colon 
\Map(BG,BH)\ra BH$ is also a fibration, and the base space $BH$ is
path connected, $\rho$ is a weak equivalence if and only if its
restriction to the fiber over the base point is, which is precisely
the weak equivalence $\Hom(G,H)\ra \Map_*(BG,BH)$ of
\eqref{thm:main}.  
\end{proof}

\section{Proof of \eqref{thm:equivariant}: Equivariant classifying
  space}
\label{sec:proof-equivariant}

\subsection{Recollections on equivariant bundles}

A \dfn{$G$-equivariant principal $H$ bundle} (or \dfn{$(G,H)$-bundle}),
is a principal $H$-bundle $\pi\colon P\ra X$, together with actions of
$G$ on $P$ and $X$, compatible with $\pi$, so that $G$ acts via maps
of principal $H$-bundles.  We will always assume that both $G$ and $H$
are compact Lie groups.

This definition is somewhat anomalous, in that $(G,H)$-bundles are not
characterized by a property which is local in $X$.  Thus, we say that
a $(G,H)$-bundle is \dfn{locally trivial} if it looks locally like
\[
(G\times H)\times_{\Lambda_\phi} U \ra G\times_{G'} U,
\]
where $G'\leq G$ is a closed subgroup, $\Lambda_\phi:=
\set{(g,\phi(g))}{g\in G'}$ is the graph of some homomorphism
$\phi\colon G'\ra H$, and $\Lambda\xra{\sim} G'$ acts on a space $U$. 
The key result is that if $G$ and $H$ are compact and $X$ is
completely regular\footnote{Completely regular = points are closed,
  and any point and disjoint closed subset are separated by a real
  valued function.}, then any $(G,H)$-bundle over $X$ is locally
trivial \cite{lashof-equivariant-bundles}*{Cor.\ 1.5}.  

A $(G,H)$-bundle $P\ra X$ is \dfn{numerable} it admits a locally
trivializing cover which itself admits a subordinate partition of
unity by $G$-invariant functions.  Over a paracompact base $X$, every
locally trivial bundle is numerable
\cite{lashof-equivariant-bundles}*{Cor.\ 1.13}.  There is a universal
$(G,H)$-bundle $E_GH\ra B_GH$, which classifies equivalence classes of
numerable 
bundles: see \cite{lashof-equivariant-bundles},
\cite{lashof-may-gen-equivariant-bundles}, and also
\cite{luck-uribe-equivariant-principal-bundles} for a recent and more
general treatment. 

We will be mainly concerned with the case of $(G,H)$-bundles
$\pi\colon P\ra X$ such that $G$-acts trivially on $X$.  In such a
case there is a natural function
\[
\tau\colon P\ra \Hom(G,H)
\]
defined so that $\tau(p)(\gamma)\in H$ is the unique $\delta\in H$
such that $(\gamma,\delta)\cdot p=p$.  When $P\ra X$ is locally
trivial, the map $\tau$ is seen to be continuous.  Observe that $\tau$
is $G\times H$-equivariant, where this group acts on $\Hom(G,H)$ by
conjugation: $(\gamma,\delta)\cdot \phi = \delta\phi(\gamma)^{-1}\phi
\phi(\gamma)\delta^{-1}$.  
\begin{lemma}\label{lemma:tau-is-fibration}
For any locally trivial $(G,H)$-bundle $\pi\colon P\ra X$ over a
$G$-fixed base $X$, the map
$\tau\colon P\ra \Hom(G,H)$ is a Serre fibration.
\end{lemma}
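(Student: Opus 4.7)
The plan is to exhibit $\tau$ as a locally trivial fiber bundle over each component of $\Hom(G,H)$, which is automatically a Serre fibration. Three preliminaries are required: $\tau$ is continuous (a consequence of the assumed local triviality of $\pi$, as just noted before the statement), $\tau$ is $H$-equivariant for the conjugation action of $H\leq G\times H$ on $\Hom(G,H)$ (recorded in the excerpt), and $\Hom(G,H)$ decomposes as the disjoint union $\coprod_{[\phi]} H/C_H(\phi)$ of $H$-orbits, each an open-closed subspace, for which I would cite \eqref{rem:unbased-in-terms-of-cent} and the underlying Proposition \ref{prop:hom-space-topology} from \S\ref{sec:space-of-homs}.

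Because the orbit decomposition is into open-closed pieces, it suffices to fix $\phi\in\Hom(G,H)$ and show that the restriction $\tau^{-1}(H/C_H(\phi))\ra H/C_H(\phi)$ is a Serre fibration. Since $C_H(\phi)\leq H$ is a closed subgroup of a compact Lie group, the quotient $H\ra H/C_H(\phi)$ is a principal $C_H(\phi)$-bundle, and in particular admits continuous local sections $s\colon V\ra H$, which I normalize so that $s(v)\,\phi\, s(v)^{-1}=v$ for $v\in V\subseteq H/C_H(\phi)\subseteq \Hom(G,H)$. Using the $H$-equivariance of $\tau$, the assignment $(v,p)\mapsto s(v)\cdot p$ defines a homeomorphism $V\times \tau^{-1}(\phi)\xra{\sim} \tau^{-1}(V)$ with inverse $q\mapsto (\tau(q),\, s(\tau(q))^{-1}\cdot q)$; both directions are continuous from the continuity of $\tau$ and of the action, and the composition identities are immediate from $s$ being a section together with $\tau(h\cdot q)=h\tau(q)h^{-1}$. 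This displays $\tau$ as a locally trivial fiber bundle with fiber $\tau^{-1}(\phi)$ over each $H$-orbit, and locally trivial bundles are Serre fibrations.

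There is no substantive obstacle; the argument is formal once the three preliminaries are assembled. The essence is that the transitive $H$-action on each orbit transports the fibers of $\tau$, and local sections of the principal bundle $H\ra H/C_H(\phi)$ make this transport continuously trivializable. The only small point to watch is the normalization of the section $s$, so that the trivialization really inverts the $H$-action along $\tau$, but this requires nothing beyond picking a section and composing with a fixed element of $H$.
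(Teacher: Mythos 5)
Your proof is correct, but it takes a different route from the paper's. The paper localizes over the base $X$: it uses the local triviality of $\pi$ to reduce to the model bundle $(G\times H)/\Lambda_\phi\times U\ra U$, computes $\tau$ there explicitly, and identifies it (via the open--closed orbit decomposition of \eqref{prop:hom-space-topology}) with the homogeneous-space projection $(G\times H)/\Lambda_\phi\ra H/C_H(\phi)$ followed by an open and closed embedding; this shows the stronger statement that $(\tau,\pi)\colon P\ra \Hom(G,H)\times X$ is a fiber bundle. You instead localize over the target $\Hom(G,H)$: you use the $H$-equivariance of $\tau$ together with local sections of $H\ra H/C_H(\phi)$ to trivialize $\tau$ directly over each open--closed orbit, never invoking the local form of $\pi$ except to guarantee continuity of $\tau$. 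Both arguments lean on the same two inputs (continuity of $\tau$ and the coproduct decomposition of $\Hom(G,H)$ into orbits), and your trivialization checks out: with $s(v)\phi s(v)^{-1}=v$ (which holds automatically for any local section under the identification $hC_H(\phi)\mapsto h\phi h^{-1}$, so the ``normalization'' is free), the maps $(v,p)\mapsto s(v)\cdot p$ and $q\mapsto(\tau(q),s(\tau(q))^{-1}\cdot q)$ are mutually inverse and continuous. Your version is slightly more conceptual and avoids the explicit formula for $\rho$; the paper's version has the advantage of exhibiting $(\tau,\pi)$ jointly as a bundle and of making visible the identification of the fibers of $\tau$ used immediately afterwards in Step 2. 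Since your trivialization commutes with $\pi$ (the $H$-action preserves fibers of $\pi$), it would also yield the joint statement with no extra work if needed.
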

\begin{proof}
This will follow by showing that $(\tau,\pi)\colon P\ra
\Hom(G,H)\times X$ is actually fiber bundle.  Since $\pi$ is locally
trivial, we can reduce to the case when $\pi$ has the form $\pi\colon
(G\times H)/\Lambda_\phi \times U\ra U$, where $\Lambda_\phi\leq
G\times H$ is the graph of some homomorphism $\phi\colon G\ra H$.
Then 
\[
(\tau,\pi)=\rho\times \id_U \colon (G\times H)/\Lambda_\phi \times U
\ra 
\Hom(G,H)\times U, 
\]
where $\rho\colon (G\times H)/\Lambda_\phi\ra \Hom(G,H)$ sends
$[\gamma,\delta]\mapsto
\delta\phi(\gamma)^{-1}\phi(\delta\phi(\gamma)^{-1})^{-1}$.  Because $\Hom(G,H)$
is topologically a coproduct of orbits under $H$-conjugation
\eqref{rem:unbased-in-terms-of-cent}, \eqref{prop:hom-space-topology},
we see that $\rho$ is isomorphic 
to the composite of a projection map $(G\times H)/\Lambda_\phi \ra
H/C_H(\phi)$ (induced by $(\gamma,\delta)\mapsto
\delta\phi(\gamma)^{-1}$) with an open and closed immersion, and thus
is a fibration. 
\end{proof}

\subsection{Outline of the proof}

To prove that the map $B_GH\ra \Map(EG,B_GH)$ (induced by restriction
along $EG\ra *$) is a $G$-equivariant weak
equivalence, it suffices to show that it induces a weak equivalence of spaces
$(B_GH)^{G'}\ra \Map(EG,B_GH)^{G'}$ for all closed subgroups $G'\leq
G$.  Without loss of generality, we may assume $G'=G$, since, when the
group action is restricted to  the subgroup $G'$, $B_GH$ is a
$B_{G'}H$ and $EG$ is an $EG'$.  Thus, we will show that $(B_GH)^G\ra
\Map(EG,B_GH)^G$ is an equivalence, using the following.

\begin{lemma}\label{lemma:htpy-commutative-classifying}
Suppose given a $(G,H)$-bundle $P\ra X$ over a
space $X$ with trivial $G$-action, together with maps:
\begin{enumerate}
\item
$\alpha\colon X\ra (B_GH)^G\subseteq B_GH$ classifying the
$G$-equivariant $H$-bundle $P\ra X$, i.e.,
covered by a $(G,H)$-bundle map $P\ra E_GH$, and
\item $\rho\colon X\ra \Map(BG,BH)$, whose adjoint $\wt\rho\colon X\times
  BG=(X\times EG)/G\ra BH$ classifies the $H$-bundle $(P\times EG)/G\ra
  (X\times EG)/G$, i.e., covered by an $H$-bundle map $(P\times EG)/G \ra EH$.
\end{enumerate}
Then the diagram
\begin{equation}\label{eq:htpy-commutative-square}
\vcenter{
\xymatrix{
{X} \ar[r]^{\alpha} \ar[d]_{\rho}
& {(B_GH)^G} \ar[d]
\\
{\Map(BG,BH)} \ar[r]_-{\sim}
& {\Map(EG,B_GH)^G}
}}
\end{equation}
commutes up to homotopy, where the bottom map is induced by a
$G$-equivariant map $\upsilon \colon BH\ra B_GH$ (with $G$ acting
trivially on $BH$) which classifies the universal $H$-bundle viewed as
a $G$-equivariant $H$-bundle with trivial $G$-action.  

Furthermore, the bottom map of the diagram is a weak equivalence.
\end{lemma}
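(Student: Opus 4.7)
The plan is to handle the three assertions in the lemma in sequence. I begin by constructing $\upsilon\colon BH\to B_GH$ as the classifying map of $EH\to BH$ viewed as a $(G,H)$-bundle with $G$ acting trivially on both total space and base; because the source $BH$ is pointwise $G$-fixed, this classifying map lands in $(B_GH)^G$, so $\upsilon$ is $G$-equivariant in the required sense, and the bottom map is defined to be $\upsilon_*$ under the homeomorphism $\Map(BG,BH)\approx\Map(EG,BH)^G$.

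For the homotopy commutativity I would argue by identifying the $(G,H)$-bundles that each composite classifies. Passing to adjoints, both composites $X\to\Map(EG,B_GH)^G$ become $G$-equivariant maps $X\times EG\to B_GH$ (where $G$ acts only on $EG$ in the source, hence freely), and as such each classifies a $(G,H)$-bundle on $X\times EG$. The top-right composite yields the pullback of $E_GH$ along $X\times EG\xrightarrow{\mathrm{pr}_1} X\xrightarrow{\alpha} B_GH$, which is $P\times EG$ with the diagonal $G$-action. The left-bottom composite factors through $\tilde\rho$ and $\upsilon$; since $\upsilon^*E_GH=EH$, this composite classifies the pullback to $X\times EG$ of the $H$-bundle $(P\times EG)/G\to X\times BG$ along the free quotient $X\times EG\to X\times BG$. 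Both $(G,H)$-bundles descend to $(P\times EG)/G$ on $X\times BG$, and since descent along a free compact Lie quotient is an equivalence between $(G,H)$-bundles above and $H$-bundles below, the two are isomorphic and their classifying maps are homotopic.

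For the weak equivalence of the bottom map, i.e., of $\upsilon_*\colon \Map(EG,BH)^G\to \Map(EG,B_GH)^G$, I use that by \eqref{thm:unbased} combined with \eqref{rem:unbased-in-terms-of-cent} the source is weakly equivalent to $\coprod_{[\phi\colon G\to H]}BC_H(\phi)$. The target $\Map(EG,B_GH)^G$ has the same homotopy type, since equivariant maps out of the free $G$-CW complex $EG$ classify $(G,H)$-bundles on $EG$, which by descent correspond to $H$-bundles on $BG$ and hence to maps $BG\to BH$. Under these identifications $\upsilon_*$ implements the descent equivalence on each summand, so it is a weak equivalence.

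The main obstacle is making the last step precise at the level of spaces rather than merely components. A concrete identification of $\Map(EG,B_GH)^G$ with $\coprod_{[\phi]}BC_H(\phi)$ compatible with $\upsilon_*$ requires an explicit model for the fixed points of $B_GH$; I would use the nerve-of-a-topological-category model previewed in \S\ref{sec:proof-equivariant}, within which $\upsilon_*$ can be traced directly and matched, component by component, against the computation of $\Map(BG,BH)$ provided by \eqref{thm:main} and \eqref{thm:unbased}.
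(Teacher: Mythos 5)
Your construction of $\upsilon$ and your argument for the homotopy commutativity of the square are essentially the paper's: both composites, after passing to adjoints, classify the $(G,H)$-bundle $P\times EG\ra X\times EG$ (equivalently, by descent along the free $G$-action, the $H$-bundle $(P\times EG)/G$ over $X\times BG$), so they agree up to homotopy by the universal property of $B_GH$. That part is fine.

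The gap is in your proof that the bottom map is a weak equivalence, and you have correctly located it yourself: your descent argument identifies only $\pi_0\Map(EG,B_GH)^G$ with $[BG,BH]$, not the homotopy type of the mapping space, and the proposed repair (an explicit categorical model for $(B_GH)^G$, matched ``component by component'' against $\Map(BG,BH)$) is not carried out. Worse, identifying the full homotopy type of $\Map(EG,B_GH)^G$ with $\coprod_{[\phi]}BC_H(\phi)$ is essentially equivalent to the conclusion of Theorem \eqref{thm:equivariant} itself (given the known computation of $(B_GH)^G$), so this route risks circularity with the very argument the lemma is meant to support. Your appeal to \eqref{thm:unbased} also needlessly restricts to $1$-truncated $H$, whereas the lemma is stated, and is true, for arbitrary compact Lie $H$; the paper emphasizes that $1$-truncatedness enters only in showing that $\rho$ is an equivalence, not in this lemma.

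The argument you are missing is short and avoids all of this. The map $\upsilon\colon BH\ra B_GH$ (obtained, e.g., by choosing a $G\times H$-equivariant map $EH\ra E_GH$ and quotienting by the free $H$-actions) is a $G$-equivariant map which is a weak equivalence of \emph{underlying} spaces, since both sides are non-equivariant models of $BH$. Now for any subgroup $G'\leq G$, the space $\Map(EG,Y)^{G'}$ is the homotopy fixed point space of the $G'$-action on $Y$; consequently $\Map(EG,-)$ carries any $G$-map that is an underlying weak equivalence to a $G$-equivariant weak equivalence. Applying this to $\upsilon$ gives that $\Map(EG,BH)\ra\Map(EG,B_GH)$ is a $G$-equivariant weak equivalence, and taking $G$-fixed points (together with the homeomorphism $\Map(EG,BH)^G\approx\Map(BG,BH)$, valid because $G$ acts trivially on $BH$) yields exactly the bottom map of the square as a weak equivalence. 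No computation of either mapping space is needed.
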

\begin{proof}
The adjoints of both composite maps $X\ra \Map(EG,B_GH)^G$ are
$G$-equivariant maps $(X\times EG)/G\ra B_GH$, which in either case
are covered by maps $(P\times EG)/G\ra E_GH$ of $(G,H)$-bundles.  The
homotopy-commutativity of the diagram follows from the universal
property  
of $B_GH$, as the classifying space for such bundles.

To see that the bottom map of the diagram is a weak equivalence, note
that it may be constructed as follows.  Choose any $G\times H$
equivariant map $EH\ra E_GH$ (unique up to homotopy by the defining
property of $E_GH$), and take quotient with respect to the free
$H$-actions, obtaining a $G$-equivariant map $\upsilon \colon BH\ra
B_GH$, which classifies the 
universal bundle as stated.  By construction, $\upsilon$ is a
$G$-equivariant map which is a weak
equivalence on underlying spaces.

For any $G$-space $X$ and subgroup $G'\leq G$, the fixed point space
$\Map(EG,X)^{G'}$ is the space of ordinary homotopy fixed points of
the $G'$-action on $X$; as a consequence, $\Map(EG,-)$  takes a map of
$G$-spaces which is a  weak equivalence of underlying spaces, to a
$G$-equivariant weak equivalence.  In particular, we see that
$\Map(EG, BH) \ra \Map(EG,B_GH)$ is a
$G$-equivariant weak equivalence.  Taking $G$-fixed points gives an
equivalence 
$\Map(BG,BH)\approx \Map(EG,BH)^G\ra \Map(EG,B_GH)^G$, which is the map
in the diagram.

\end{proof}

The strategy is as follows.  Fix compact Lie groups $G$ and $H$, and  take 
  $\rho\colon X\ra \Map(BG,BH)$ in
  \eqref{lemma:htpy-commutative-classifying} to be  
 isomorphic to the 
map $(\Hom(G,H)\times EH)/H\ra \Map(BG,BH)$ described in
\S\ref{sec:proof-unbased}, which for 1-truncated $H$
gives the weak 
equivalence of 
\eqref{thm:unbased}.   We will
\begin{enumerate}
\item  construct a certain $(G,H)$-bundle $P\ra X$
(where $G$ acts trivially on $X$), 
\item prove that a map $\alpha\colon X\ra B_GH$ classifying $P\ra X$
  induces a weak equivalence $X\xra{\sim} (B_GH)^G\subseteq B_GH$, and
\item construct a bundle map $(P\times EG)/G\ra EH$ covering the
  adjoint $\wt\rho\colon X\times BG\ra BH$ to $\rho$.
\end{enumerate}
Thus by \eqref{lemma:htpy-commutative-classifying} both $\alpha$ and
$\rho$ fit in  a homotopy
commutative square \eqref{eq:htpy-commutative-square}.  It follows
that if $H$ is 1-truncated, \eqref{thm:unbased} implies that $\rho$ is
a weak equivalence, from which it follows that the right-hand vertical
arrow is a weak equivalence, 
which is the desired  result.  Note: the hypothesis that $H$ is
1-truncated is used only to show 
that $\rho$ (which exists for arbitrary $H$) is a weak equivalence. 

\subsection{Step 1: Construction of $P\ra X$}

As in the previous section, we consider categories
$\Fun(G,H)$ and $(H\curvearrowright H)$ (internal to $\Top$), where $G$
and $H$ are compact 
Lie groups.  Consider the topological
category $C$ defined as the fiber product
\[
C:= \Fun(G,H)\times_H (H\curvearrowright H)
\]
via the evident restriction functors  $\Fun(G,H)\ra \Fun(\{e\},H)=H$
and $(H\curvearrowright H)\ra 
(H\curvearrowright *)=H$.  (Here $H$ represents a topological category
with one object.)

The group $G\times H$ acts on $C$ via
\[
(\gamma,\delta)\cdot (\phi_0,h_0) = (\phi_0,
\phi_0(\gamma)h_0\delta^{-1}), 
\]
on objects and
\[
(\gamma,\delta)\cdot (\phi_0\xra{h}\phi_1, h_0\xra{h}h_1) =
(\phi_0\xra{h}\phi_1, \phi_0(\gamma)h_0\delta^{-1}\xra{h} \phi_1(\gamma)h_1\delta^{-1})
\]
on morphisms, $(\gamma,\delta)\in G\times H$.
(This works exactly because
$h\phi_0(\gamma)=\phi_1(\gamma) h$.)  The evident
projection functor $C\ra \Fun(G,H)$ is invariant under the $G\times H$ action
on $C$, with respect to the trivial action on $\Fun(G,H)$.  

We set $P:= \Len{NC}$ and $X:=\Len{N\Fun(G,H)}$, with $P\ra X$ induced
by the evident projection functor.  It is straightforward to show that
the induced $G\times H$-action on $P$ is compatible with the
projection map to $X$, and that $H$ acts freely on $P$ with
$P/H\approx X$.  In particular, $P\ra X$ has the structure of a
$G$-equivariant principal $H$-bundle.  

We note an equivalent description of $C$, and hence of $P$.  
Let $C':=\Hom(G,H)\times N(H\curvearrowright H)$, where $\Hom(G,H)$ is
viewed as a topological category with only identity maps.  There is an
isomorphism $C'\ra C$ of topological categories, given on objects and
morphisms by 
\[
(\phi,h_0)\mapsto (h_0\phi h_0^{-1}, h_0),\qquad 
(\phi, h_0\xra{h} h_1) \mapsto (h_0\phi h_0^{-1}\xra{h} h_1\phi
h_1^{-1}, h_0\xra{h} h_1).
\]
The $G\times H$-action on $C'$ induced by this isomorphism is
described by
\[
(\gamma,\delta)\cdot (\phi,h_0) = (\delta\phi(\gamma)^{-1}
\phi\phi(\gamma)\delta^{-1}, 
h_0\phi(\gamma)\delta^{-1}), 
\]
\[
(\gamma,\delta)\cdot (\phi,h_0\xra{h}h_1) =
(\delta\phi(\gamma)^{-1}\phi\phi(\gamma)\delta^{-1},
h_0\phi(\gamma)\delta^{-1}\xra{h} h_1\phi(\gamma)\delta^{-1}).  
\]
In particular, the projection functor $C'\ra \Hom(G,H)$ induces a
$G\times H$-equivariant map $P\ra \Hom(G,H)$ (using the conjugation
$G\times H$ action on $\Hom(G,H)$), and this map is a non-equivariant
weak equivalence, since $P\approx \Len{NC'}\approx \Hom(G,H)\times EH$.  

\subsection{Step 2: The weak equivalence $\alpha\colon X\ra
  (B_GH)^G$} 

Choose any $X\ra B_GH$ classifying the $P\ra X$ constructed above
(this exists because $X$ is paracompact and completely regular, so
numerable), and so covered by a $G\times H$-equivariant  map $P\ra E_GH$.    Since the
action of $G$ on $X$ is trivial, these factor through $\alpha\colon
X\ra (B_GH)^G$ and $\alpha'\colon P\ra p^{-1}((B_GH)^G)$, where
$p\colon E_GH\ra B_GH$ is the universal bundle.

\begin{lemma}
The map $\alpha'\colon P\ra p^{-1}((B_GH)^G)$ is a weak equivalence of
underlying spaces.
\end{lemma}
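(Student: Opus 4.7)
The plan is to compare both $P$ and $Q := p^{-1}((B_GH)^G)$ to the space $\Hom(G,H)$ by way of canonical $G \times H$-equivariant maps, and verify that both such maps are weak equivalences intertwined by $\alpha'$; two-out-of-three then gives that $\alpha'$ itself is a weak equivalence of underlying spaces.

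For the source, the isomorphism $C \approx C' = \Hom(G,H) \times N(H \curvearrowright H)$ exhibited at the end of Step 1 produces, after realization, a $G \times H$-equivariant projection $\tau_P \colon P \ra \Hom(G,H)$, which is a trivial Serre fibration with contractible fiber $EH = \Len{N(H\curvearrowright H)}$. For the target, the universal bundle $E_GH \ra B_GH$ is locally trivial over any completely regular base (by \cite{lashof-equivariant-bundles}*{Cor.\ 1.5}), so the pullback $Q \ra (B_GH)^G$ is a locally trivial $(G,H)$-bundle over a base with trivial $G$-action; \eqref{lemma:tau-is-fibration} then yields a Serre fibration $\tau \colon Q \ra \Hom(G,H)$. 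Both $\tau_P$ and $\tau$ send a point to the unique $\phi$ whose graph $\Lambda_\phi$ coincides with its full $G \times H$-stabilizer, so the $G \times H$-equivariance of $\alpha'$ forces $\tau \circ \alpha' = \tau_P$.

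The remaining task is to show that $\tau$ is itself a weak equivalence. Using the $H$-conjugation decomposition \eqref{rem:unbased-in-terms-of-cent}, $\Hom(G,H) \approx \coprod_{[\phi]} H/C_H(\phi)$, together with the classical identification $(B_GH)^G \simeq \coprod_{[\phi]} BC_H(\phi)$ of \cite{lashof-may-gen-equivariant-bundles}*{Theorem 10}, the map $\tau$ decomposes as a disjoint union, over conjugacy classes $[\phi]$, of fibrations $Q_{[\phi]} \ra H/C_H(\phi)$. On each summand, $Q_{[\phi]} \ra BC_H(\phi)$ is the principal $H$-bundle classified by the inclusion $C_H(\phi) \hookrightarrow H$, so its total space is homeomorphic to $EC_H(\phi) \times_{C_H(\phi)} H$, which projects to $H/C_H(\phi)$ with contractible fiber $EC_H(\phi)$; this projection is precisely the restriction of $\tau$, and is therefore a weak equivalence.

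The main obstacle is the third paragraph: identifying $\tau$ as a weak equivalence relies on invoking the Lashof--May structure theorem for the $G$-fixed subspace of $B_GH$ together with its bundle-theoretic refinement identifying the restriction of the universal bundle. The first two paragraphs are essentially formal unpacking of the definitions assembled in Step 1 and of \eqref{lemma:tau-is-fibration}.
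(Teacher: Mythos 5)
Your first two paragraphs set up exactly the comparison the paper itself uses: both $P$ and $Q:=p^{-1}((B_GH)^G)$ map $G\times H$-equivariantly to $\Hom(G,H)$ via $\tau$, these maps are Serre fibrations by \eqref{lemma:tau-is-fibration}, they are intertwined by $\alpha'$, and the fiber of $\tau_P$ over $\phi$ is the contractible space $EH$. The gap is in your third paragraph. The claim that $Q_{[\phi]}$ ``is the principal $H$-bundle classified by the inclusion $C_H(\phi)\hookrightarrow H$, so its total space is homeomorphic to $EC_H(\phi)\times_{C_H(\phi)}H$'' does not hold as stated: the base of $Q_{[\phi]}$ is only \emph{weakly equivalent} to $BC_H(\phi)$, so at best the total spaces are weakly equivalent; and, more seriously, the assertion that the reference projection ``is precisely the restriction of $\tau$'' is exactly the identification that needs proof. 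Knowing abstractly that the total space of $Q_{[\phi]}$ is weakly equivalent to $H/C_H(\phi)\approx\Hom(G,H)_{[\phi]}$ does not show that the particular map $\tau$ realizes that equivalence, and a non-equivariant classifying-map comparison cannot see the $G$-action through which $\tau$ is defined.

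The direct repair --- and what the paper does --- is to compute the fiber of $\tau\colon Q\ra\Hom(G,H)$ over $\phi$ straight from the definition of $\tau$: a point $q$ satisfies $\tau(q)=\phi$ if and only if $(\gamma,\phi(\gamma))\cdot q=q$ for all $\gamma\in G$, so the fiber is exactly $(E_GH)^{\Lambda_\phi}$, which is contractible by the defining property of the universal $(G,H)$-bundle (its fixed points under graph subgroups are contractible). Both $\tau$'s are then Serre fibrations with contractible fibers, hence weak equivalences, and $\alpha'$ is one by two-out-of-three. Note also that the Lashof--May identification $(B_GH)^G\simeq\coprod_{[\phi]}BC_H(\phi)$ you invoke is itself proved from the contractibility of $(E_GH)^{\Lambda_\phi}$, so the detour through it gains nothing while introducing the imprecision above.
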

\begin{proof}
The map $\alpha'$ fits in a commutative diagram,
\[\xymatrix{
{P} \ar[rr]^{\alpha'}  \ar[dr]_{\tau}
&& {p^{-1}((B_GH)^G)} \ar[dl]^{\tau}
\\
& {\Hom(G,H)}
}\]
where by \eqref{lemma:tau-is-fibration} both maps marked $\tau$ are
Serre fibrations.  The fibers of these $\tau$s over $\phi\in
\Hom(G,H)$ are $EH$ and 
$(E_GH)^{\Lambda_\phi}$ respectively, both of which spaces are
contractible.  Thus  $\alpha'$ is a weak equivalence (as are both $\tau$s).
\end{proof}

It follows that $\alpha\colon X\ra (B_GH)^G$ is a weak equivalence, as
it is obtained by the quotient of $\alpha'$ by free $H$-actions.

\subsection{Step 3: The bundle map covering $\rho\colon X\ra \Map(BG,BH)$}  

We have a commutative square of functors
\[\xymatrix{
{C\times (G\curvearrowright G) = \Fun(G,H)\times_H (H\curvearrowright
  H)\times (G\curvearrowright G)} \ar[r] \ar[d] 
& {(H\curvearrowright H)} \ar[d]
\\
{\Fun(G,H)\times (G\curvearrowright G)} \ar[r]
& {H}
}\]
where the vertical arrows are the evident projections, the top
horizontal arrow is given by 
\[
(\phi_0,h_0,g_0) \mapsto (\phi_0(g_0)h_0),
\qquad
(\phi_0\xra{h} \phi_1, h_0\xra{h}h_1, g_0\xra{g} g_1) \mapsto 
(\phi_0(g_0)h_0 \xra{h\phi_0(g)=\phi_1(g)h} \phi_1(g_1)h_1). 
\]
on objects and morphisms, and the bottom horizontal arrow is given by 
\[
(\phi_0,g_0) \mapsto *, \qquad (\phi_0\xra{h}\phi_1, g_0\xra{g}
g_1)\mapsto (*\xra{h\phi_0(g)=\phi_1(g)h} *). 
\]
The group $G$ acts on the objects on the left-hand side of the square,
where $G$ acts on $C$ as described above, by the tautological right
action on $(G\curvearrowright G)$, and trivially on $\Fun(G,H)$. 
The horizontal arrows are
invariant under this $G$ action.  

Thus, taking geometric realizations of nerves and passing to quotients
by $G$-actions, we obtain a commutative square
\[\xymatrix{
{(P\times EG)/G} \ar[r] \ar[d]
& {EH} \ar[d]
\\
{(X\times EG)/G} \ar[r]
& {BH}
}\]
which is evidently a map of $H$-bundles.  Under the identification
$(X\times EG)/G \approx X\times BG \approx N(\Fun(G,H)\times G)$, we
see that the bottom arrow is isomorphic to that obtained from the
evaluation functor $\Fun(G,H)\times G\ra H$, and thus is adjoint to
the map $\rho\colon X\ra \Map(BG,BH)$ described earlier.

\section{The space of homomorphisms between compact Lie groups}
\label{sec:space-of-homs}

Recall that $\Hom(G,H)$ denotes the space of homomorphisms equipped
with the compact-open topology.  We give a proof of the following
fact, which is standard but not easily read from the literature with
which I am 
familiar.
\begin{prop}\label{prop:hom-space-topology}
Let $G$ and $H$ be Lie groups, with $G$ compact.
The  map
\[
(\phi,hC(\phi))\mapsto h\phi h^{-1}\colon \coprod_{[\phi]}
H/C(\phi)\ra \Hom(G,H), 
\]
where $C(\phi)=\set{h\in H}{\phi(g)h=h\phi(g)\;\forall
  g\in G}$, and $[\phi]$ runs over a set of $H$-conjugacy classes in 
$\Hom(G,H)$, is a homeomorphism.  In particular, $\Hom(G,H)$ is
locally compact, and thus a CGWH space.
\end{prop}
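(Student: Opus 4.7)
The plan is to show that the map $\Phi \colon \coprod_{[\phi]} H/C(\phi) \to \Hom(G,H)$ is continuous, bijective, and open. Continuity and bijectivity are formal: since $G$ is compact, the compact-open topology on $\Hom(G,H)$ agrees with uniform convergence, and for each chosen representative $\phi$ the map $h \mapsto h\phi h^{-1}$ is continuous and $C(\phi)$-invariant, so descends continuously to $H/C(\phi)$; bijectivity is the orbit--stabilizer statement that conjugacy classes partition $\Hom(G,H)$ with stabilizer $C(\phi)$. The substance of the proposition is that $\Phi$ is open, and this reduces to showing (i) every $H$-conjugacy orbit $H\cdot\phi$ is open in $\Hom(G,H)$, and (ii) the orbit map $H/C(\phi) \to H\cdot\phi$ is a homeomorphism.

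Both (i) and (ii) are consequences of the following local rigidity statement, which is the main obstacle: for each $\phi \in \Hom(G,H)$ there are neighborhoods $U$ of $\phi$ in $\Hom(G,H)$ and $V$ of $e$ in $H$ such that every $\phi' \in U$ equals $v\phi v^{-1}$ for some $v \in V$. Granting this, (i) is immediate since every point of an orbit then has a neighborhood of conjugates; and (ii) reduces by $H$-equivariance to openness at the basepoint $eC(\phi) \mapsto \phi$, which the rigidity statement provides directly.

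To prove the rigidity statement, I would linearize via the exponential map of $H$: for $\phi'$ uniformly close to $\phi$, write $\phi'(g) = \exp(u(g))\,\phi(g)$ with $u \colon G \to \mathfrak{h}$ a small continuous function. Expanding the homomorphism condition on $\phi'$ via the Baker--Campbell--Hausdorff formula gives the approximate 1-cocycle identity
\[
u(g_1g_2) = u(g_1) + \operatorname{Ad}(\phi(g_1))\,u(g_2) + O(\|u\|^2).
\]
Averaging over $G$ with normalized Haar measure trivializes this cocycle: setting $X := \int_G u(g)\,dg \in \mathfrak{h}$, a short computation using left-invariance of Haar measure yields $u(g) = X - \operatorname{Ad}(\phi(g))\,X + O(\|u\|^2)$, which translates back to $\phi'(g) = \exp(X)\,\phi(g)\,\exp(-X)$ up to quadratic error. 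A standard Newton-style iteration (or a Banach implicit function theorem applied in $\Map(G,\mathfrak{h})$) then upgrades this approximate identity to an exact one on a possibly smaller neighborhood, producing the required $v = \exp(X)$ close to $e$.

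Finally, the local compactness and CGWH claim follows automatically: $C(\phi) \leq H$ is a closed subgroup and hence a Lie subgroup, so $H/C(\phi)$ is a smooth manifold, and a disjoint union of locally compact Hausdorff spaces is locally compact Hausdorff.
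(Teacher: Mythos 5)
Your proposal is correct in outline, but it takes a genuinely different route from the paper. The paper gets the key openness statement by quoting the Montgomery--Zippin theorem on neighboring subgroups: applied to the graph $\Lambda_\phi\leq G\times H$ of $\phi$, it gives (via Conner--Floyd) a neighborhood of $\Lambda_\phi$ in $G\times H$ all of whose graph subgroups are conjugate to $\Lambda_\phi$; a tube-lemma argument translates this into a compact-open neighborhood of $\phi$ consisting of conjugates of $\phi$, and the homeomorphism onto each orbit is then deduced from continuity, Hausdorffness, and compactness of $H/C(\phi)$. You instead prove the local rigidity statement directly: linearize $\phi'=\exp(u)\phi$ near $\phi$, observe that $u$ is an approximate $1$-cocycle for the adjoint action of $G$ through $\phi$, trivialize it up to quadratic error by averaging against normalized Haar measure, and iterate. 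This is a known and rigorous technique (it is the degree-one analogue of the Haar-measure contracting homotopy used in \S\ref{sec:proof-based} of this paper for $2$-cocycles), and it buys you something: your version produces a conjugating element \emph{near the identity}, which yields openness of the orbit map $H/C(\phi)\to H\cdot\phi$ at the basepoint by equivariance, without invoking compactness of $H/C(\phi)$; this is cleaner when $H$ is a noncompact Lie group, as the statement allows, and it avoids citing Montgomery--Zippin altogether. The cost is that the "standard Newton-style iteration" must actually be written out: you need uniform Baker--Campbell--Hausdorff remainder estimates on a fixed ball, boundedness of $\operatorname{Ad}(\phi(G))$ on $\mathfrak{h}$ (which holds since $\phi(G)$ is compact), and convergence of the infinite product of conjugators $\exp(X_1)\exp(X_2)\cdots$ coming from the quadratically decaying errors --- routine but not free, and note the final conjugator is this product rather than the single $\exp(X)$ of the first step. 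The parenthetical appeal to a Banach implicit function theorem in $\Map(G,\mathfrak{h})$ is less straightforward to set up (one would have to identify the derivative as a coboundary operator and produce a continuous splitting) and I would not lean on it; the iteration is the argument to carry out. Your concluding local-compactness remark is fine, since each orbit is an open subset homeomorphic to the manifold $H/C(\phi)$.
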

\begin{proof}
We quote a classical theorem of
Montgomery-Zippin
\cite{montgomery-zippin-topological-transformation-groups}*{p.\ 
  216}: for every compact subgroup $K$ of a Lie group $L$, there
exists a neighborhood $U$ of $K$ such that every closed subgroup of
$L$ in $U$ is $L$-conjugate to a subgroup of $K$.  Applied to
$L=G\times H$, and
$K=\Lambda_\phi=\set{(g,\phi(g)}{g\in G}$, the graph of
a continuous homomorphism $\phi\colon G\ra H$, we obtain a
neighborhood $U\subseteq G\times H$ of $\Lambda_\phi$ such that if
$\Lambda_{\phi'}\in U$ for $\phi'\in \Hom(G,H)$, then
$\phi'$ is $H$-conjugate $\phi$
\cite{conner-floyd-diff-per-maps}*{38.1}.  

There exists a neighborhood $V$ of $e\in H$ such that
$\Lambda_{\phi}\subseteq \set{(g,h)}{h\phi(g)^{-1}\in V}\subseteq U$.
To see this, use the homeomorphism $\alpha\colon G\times H\ra G\times
H$, $\alpha(g,h)=(g,h\phi(g)^{-1})$, together with the tube lemma
applied to $G\times \{e\}\subseteq \alpha(U)$.  

By definition, the set $V':= \set{f\colon G\ra H}{f(G)\subseteq V}$ is
an open subset of $C(G,H)$, the space of continuous maps $G\ra H$
equipped with the compact-open topology.  The space $C(G,H)$ is a
topological group under pointwise multiplication in $H$; to prove
this, use the fact that $G$, $H$, and finite products thereof are
locally compact, so that the relevant evaluation maps are continuous.
Therefore, the translated subset $V'\phi$ is open in $C(G,H)$.
Tracing through the definitions, we see that any continuous
homomorphism $G\ra H$  in $V'\phi$ must be conjugate to $\phi$.

Thus, we have shown that conjugacy classes are open subsets of
$\Hom(G,H)$.  

Now consider the map of the proposition.  Each $H/C(\phi)$ maps
bijectively to a conjugacy class in $\Hom(G,H)$.  As $H$ is Hausdorff,
so is $C(G,H)$ and hence so is the subspace $\Hom(G,H)$.  Therefore, each
$H/C(\phi)\ra 
\Hom(G,H)$ gives  a homeomorphism to its image, since
$H/C(\phi)$ is 
compact.  Because the image is also open, the homeomorphism of the
proposition follows.  

As an immediate consequence, we see that $\Hom(G,H)$ is a coproduct of
compact Hausdorff spaces, and thus locally compact.
\end{proof}

\begin{bibdiv}
\begin{biblist}
\bib{brown-groupoids-crossed-objects}{article}{
  author={Brown, Ronald},
  title={Groupoids and crossed objects in algebraic topology},
  journal={Homology Homotopy Appl.},
  volume={1},
  date={1999},
  pages={1--78 (electronic)},
  issn={1512-0139},
}

\bib{conner-floyd-diff-per-maps}{book}{
  author={Conner, P. E.},
  author={Floyd, E. E.},
  title={Differentiable periodic maps},
  series={Ergebnisse der Mathematik und ihrer Grenzgebiete, N. F., Band 33},
  publisher={Academic Press Inc.},
  place={Publishers, New York},
  date={1964},
  pages={vii+148},
}

\bib{dwyer-zabrodsky-maps-between-classifying}{article}{
  author={Dwyer, W.},
  author={Zabrodsky, A.},
  title={Maps between classifying spaces},
  conference={ title={Algebraic topology, Barcelona, 1986}, },
  book={ series={Lecture Notes in Math.}, volume={1298}, publisher={Springer, Berlin}, },
  date={1987},
  pages={106--119},
}

\bib{guillou-may-merling-cat-models-eq-class-space}{article}{
  author={Guillou, B. J.},
  author={May, J. P.},
  author={Merling, M.},
  title={Categorical models for equivariant classifying spaces},
  date={2012},
  eprint={arXiv:1201.5178},
}

\bib{lashof-equivariant-bundles}{article}{
  author={Lashof, R. K.},
  title={Equivariant bundles},
  journal={Illinois J. Math.},
  volume={26},
  date={1982},
  number={2},
  pages={257--271},
  issn={0019-2089},
}

\bib{lashof-may-gen-equivariant-bundles}{article}{
  author={Lashof, R. K.},
  author={May, J. P.},
  title={Generalized equivariant bundles},
  journal={Bull. Soc. Math. Belg. S\'er. A},
  volume={38},
  date={1986},
  pages={265--271 (1987)},
  issn={0037-9476},
}

\bib{lashof-may-segal-equivariant-abelian}{article}{
  author={Lashof, R. K.},
  author={May, J. P.},
  author={Segal, G. B.},
  title={Equivariant bundles with abelian structural group},
  conference={ title={Proceedings of the Northwestern Homotopy Theory Conference (Evanston, Ill., 1982)}, },
  book={ series={Contemp. Math.}, volume={19}, publisher={Amer. Math. Soc.}, place={Providence, RI}, },
  date={1983},
  pages={167--176},
}

\bib{luck-uribe-equivariant-principal-bundles}{article}{
  author={L\"uck, Wolfgang},
  author={Uribe, Bernardo},
  title={Equivariant principal bundles and their classifying spaces},
  journal={Algebr. Geom. Topol.},
  volume={14},
  date={2014},
  number={4},
  pages={1925--1995},
  issn={1472-2747},
}

\bib{may-classifying-fibrations}{article}{
  author={May, J. Peter},
  title={Classifying spaces and fibrations},
  journal={Mem. Amer. Math. Soc.},
  volume={1},
  date={1975},
  number={1, 155},
  pages={xiii+98},
  issn={0065-9266},
}

\bib{may-remarks-equivariant-bundles}{article}{
  author={May, J. P.},
  title={Some remarks on equivariant bundles and classifying spaces},
  note={International Conference on Homotopy Theory (Marseille-Luminy, 1988)},
  journal={Ast\'erisque},
  number={191},
  date={1990},
  pages={7, 239--253},
  issn={0303-1179},
}

\bib{montgomery-zippin-topological-transformation-groups}{book}{
  author={Montgomery, Deane},
  author={Zippin, Leo},
  title={Topological transformation groups},
  publisher={Interscience Publishers, New York-London},
  date={1955},
  pages={xi+282},
}

\bib{notbohm-maps-between-classifying}{article}{
  author={Notbohm, Dietrich},
  title={Maps between classifying spaces},
  journal={Math. Z.},
  volume={207},
  date={1991},
  number={1},
  pages={153--168},
  issn={0025-5874},
}

\end{biblist}
\end{bibdiv}

\end{document}